\newtheorem{Theorem}{Theorem}
\newtheorem{proposition}[Theorem]{Proposition}
\newtheorem*{remark}{Remark}
\newtheorem{lemma}[Theorem]{Lemma}
\newcommand{\mfR}{{\mathfrak{R}}}
\newcommand{\xvec}{{\bf{x}}}
\newcommand{\Xmat}{{\bf{X}}}
\newcommand{\Prob}{\mathbb{P}}
\newcommand{\Ex}{\mathbb{E}}
\newcommand{\mJ}{\mathcal{J}}
\newcommand{\Wnm}{W_n^{(m)}}
\newcommand{\define}{\stackrel{\triangle}{=}}
\newcommand{\be}{\begin{equation}}
\newcommand{\ee}{\end{equation}}
\newcommand{\beqna}{\begin{eqnarray}}
\newcommand{\eeqna}{\end{eqnarray}}
\begin{document}
	\title{Bayesian Quickest Detection of Propagating Spatial Events}
	\author{Topi~Halme,~\IEEEmembership{Student~Member,~IEEE,}
		Eyal~Nitzan,~\IEEEmembership{Member,~IEEE,}
		and~Visa~Koivunen,~\IEEEmembership{Fellow,~IEEE}
		\thanks{This work was partly supported by the Academy of Finland projects: (1) Statistical Signal Processing Theory and Computational Methods for Large Scale Data Analysis (2) WiFiUS project: Secure Inference in the Internet of Things.}
		\thanks{{\footnotesize{ T. Halme, E. Nitzan, and V. Koivunen are with the Department of Signal Processing and Acoustics, Aalto University, Espoo, Finland, e-mail: topi.halme@aalto.fi, eyal.nitzan@aalto.fi,  and visa.koivunen@aalto.fi.}}}
	}
	
	\maketitle
	\nopagebreak
	
	\begin{abstract}
		Rapid detection of spatial events that propagate across a sensor network is of wide interest in many modern applications. In particular, in communications, radar, IoT, environmental monitoring, and biosurveillance, we may observe propagating fields or particles. In this paper, we propose Bayesian sequential single and multiple change-point detection procedures for the rapid detection of such phenomena. 
Using a dynamic programming framework we derive the structure of the optimal single-event quickest detection procedure, which minimizes the average detection delay (ADD) subject to a false alarm probability upper bound. The multi-sensor system configuration is arbitrary and sensors may be mobile. In the rare event regime, the optimal procedure converges to a more practical threshold test on the posterior probability of the change point. A convenient recursive computation of this posterior probability is derived by using the propagation characteristics of the spatial event. The ADD of the posterior probability threshold test is analyzed in the asymptotic regime, and specific analysis is conducted in the setting of detecting random Gaussian signals affected by path loss. Then, we show how the proposed procedure is easy to extend for detecting multiple propagating spatial events in parallel in a multiple hypothesis testing setting. A method that provides strict false discovery rate (FDR) control is proposed. In the simulation section, it is demonstrated that exploiting the spatial properties of the event decreases the ADD compared to procedures that do not utilize this information, even under model mismatch.
	\end{abstract}
	
	\begin{IEEEkeywords}
		Sensor network, Bayesian spatial change-point detection, change propagation, average detection delay, false discovery rate, multiple hypothesis testing
	\end{IEEEkeywords}
	
	\section{Introduction}
	Sequential change-point detection, often referred to as quickest detection, is a fundamental statistical inference task \cite{PAGE,SHIRYAEV_OPTIMUM,LORDEN,POLLAK,MOUSTAKIDES,LAI,POOR_QUICKEST,TARTAKOVSKY_GENERAL,TARTAKOVSKY_ASIMPTOTIC}. It is encountered in numerous applications, such as Internet of Things, environmental monitoring, biosurveillance, finance, radar, and wireless communications. Sensor networks are commonly used to rapidly detect a disruption or an event in the monitored physical enviornment \cite{RAGHAVAN,CHEN_ZHANG_POOR_BAYESIAN_JOURNAL,VEERAVALLI_DECENTRALIZED,POOR_ONE_SHOT,CHAUDHARI,KURT_WANG}. Usually, in these sensor networks the sensors communicate with a fusion center (FC) or a cloud that performs statistical inference tasks based on the data or local statistics from the sensors. The network setting can be centralized \cite{RAGHAVAN,CHEN_ZHANG_POOR_BAYESIAN_JOURNAL} where the FC has access to all the data from the sensors or decentralized \cite{VEERAVALLI_DECENTRALIZED,POOR_ONE_SHOT,CHAUDHARI,KURT_WANG,JIAN_LI_DISTRIBUTED,CUI_DISTRIB} where the sensors perform local computations/inferences and may only send the results, e.g. some sufficient statistic, to the FC. Recently, quickest detection of multiple change points in parallel has gained wide interest \cite{CHEN_ZHANG_POOR_NON_BAYESIAN,CHEN_ZHANG_POOR_BAYESIAN_JOURNAL,CISS2020,ICASSP2020,NITZAN_TSP_BAYESIAN}. Parallel multiple change points can be caused, for example, by multiple active radio transmitters, multiple sound sources, multiple radar targets, or multiple emitters of polluting particles. 
	\indent
	
	In many cases, the event causing the change in the environment has some spatial properties. The event can be a moving target that appears in a surveillance system, propagating radio frequency or audio signals impinging distributed sensors, pollution emanating from a malfunctioning device, or the onset of an epidemic. The effect of the spatial event on a specific area of the network can be modeled in different ways depending on the underlying physical phenomenon of interest and parameters of the sensor system observing it, such as the sensor system configuration and sampling rate. For example, the event may instantaneously affect the sampling distributions of all the sensors in the vicinity of the event \cite{VEERAVALLI_DECENTRALIZED,TARTAKOVSKY_GENERAL}. Alternatively, the disruption may propagate across the sensors in the network or in some cluster of the network over a short time period \cite{RAGHAVAN,KURT_WANG,LUDKOVSKI}. Examples are propagation of polluting particles in environmental monitoring, seismic activity in earthquake monitoring, propagation of radio waves through space in communication or radar systems, and epidemic traveling wave in biosurveillance.\\
	\indent
	Several works have considered quickest detection while incorporating spatial information. In \cite{RAGHAVAN}, Bayesian quickest detection was considered where the sensors are numbered and located with regular geometry and uniform displacements. The initial origin of the disruption was known to be at the first sensor and the disruption propagates through all the sensors as a Markov process in an order determined by the numbering. Extension of \cite{RAGHAVAN} to the case where the first sensor experiencing the initial change is unknown was proposed in \cite{LAI_QUICKEST_MARKOV}. Given the sensor observing the disruption first, a predetermined change propagation trajectory was assumed across the sensors. The work in \cite{KURT_WANG} considered a similar setting to \cite{RAGHAVAN,LAI_QUICKEST_MARKOV}, with the difference that the change propagation pattern is assumed to be unknown. In addition, both centralized and decentralized settings were considered. In \cite{LUDKOVSKI}, Bayesian continuous-time single change-point detection with sensor networks was studied. The event was assumed to occur at a random time instant in a random location and gradually propagate through the sensor network with unknown velocity triggering interdependent change points. A numerical procedure was proposed to approximate the optimal but intractable Bayesian solution based on the approximated posterior probabilities. Non-Bayesian change-point detection with spatial information under different setups has been studied in \cite{SIEGMUND_YAKIR,XIE_COMMUNITY,XIE_EVENTS,ROVATSOS_SEQUENTIAL,ROVATSOS_ARXIV,ROVATSOS_ICASSP2020,ZOU_VEERAVALLI,WANG_SPATIAL,XIAN_ONLINE,NEBHAN_CORRELATION}. In \cite{ROVATSOS_SEQUENTIAL}, a setting where a moving anomaly affects one sensor at a time was considered. As the disruption moves over the network, the affected sensor changes with time according to either a known or an unknown probability model.
	Rapid detection of propagating phenomena has recently also been studied within the learning and adaptation framework \cite{MARANO}. In \cite{MARANO}, a fully-flat network without a central unit is deployed for monitoring. Sensors must exchange information with their neighbors in order to accurately estimate the true state of nature in their vicinity. Due to the more complex network communication topology and interaction and information exchange among the neighboring sensor nodes, obtaining strong performance guarantees in terms of detection delay and false alarm rate, as is the goal under a quickest detection formulation, is difficult. Moreover, the delays in exchanging information among sensors may be significant compared to the propagation speed of the monitored phenomenon. Nonetheless, an information diffusion scheme that results in each sensor detecting changes faster than sensors working in isolation was derived.
	\\
	\indent

	
	\indent
	In this paper, we propose a Bayesian method for the detection of a propagating spatial event. A discrete-time model is used in acquiring observations. It is assumed that the event propagates in a two-dimensional plane  with area radius that increases either randomly or deterministically based on the laws of physics. This wavefront propagation model is illustrated in Fig. \ref{wavefront}. It is relevant, for example, in wireless communications where radio waves that carry information are emitted from a transmitter, propagate through space with velocity equal to the speed of light, and detected by a receiver \cite[Ch. 3]{RADIO_BOOK}. The considered propagation model and change detection setup are also relevant in radar and seismic monitoring and localization applications where, for example, a point source or a target is generating or reflecting waveforms that propagate across a sensor array as a plane wave \cite{YONG_POINT,WEISS_POINT}. Another application is in biosurveillance applications that attempt to detect outbreaks of epidemics \cite{LUDKOVSKI,LOPEZ_MODELING,TROVAO_INFERENCE,CUNNIFFE_OAK}. 
	The sensors of the network take observations sequentially in discrete time slots. At each time slot, sensors that are located outside the disruption area obtain observations that follow a common null distribution (no signal present). Sensors that are located within the disruption area obtain observations that obey alternative distributions that may be different among the exposed sensors. We are interested in detecting the initial event as quickly as possible subject to statistical constraints on the rate of false alarms. We assume that the number of sensors and their locations are known at each time slot but may change in time, e.g. a mobile wireless sensor network \cite{CORTES_MOBILE} where the sensors might correspond to e.g. smart phones or drones. Since the main focus of this work is on detection of spatially localized events, it is assumed that all sensors are able to communicate individually with a common FC or cloud. The FC could be a base station (BS) serving different users and sensors in its coverage area. Modern wireless systems such as 5G have reasonably small coverage areas because of the higher frequencies they are using.  Separate mobile access points that traverse the network and collect information from the sensors, as in the SENMA framework \cite{LANG_SENMA, GKHAN_SENMA}, are not required. Obviously, there exists a wide area of applications where such access points are useful, but providing guarantees on the detection performance may be difficult in those settings. 
	
	\begin{figure}[ht!]
		\centering
		\includegraphics[scale = 1]{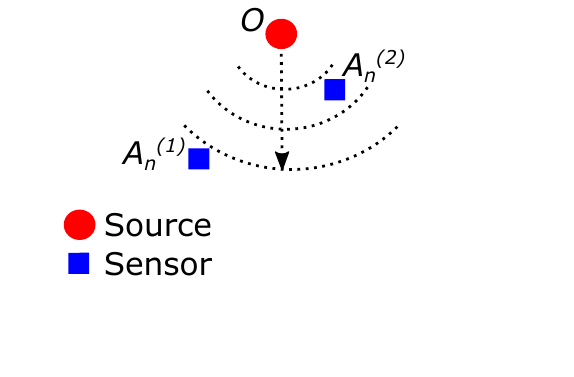}
		\caption{Source wavefront propagation: Phenomenon emanating from the source $O$ towards the sensors $A_n^{(1)}$ and $A_n^{(2)}$ where $n$ is the current time slot}
		\label{wavefront}
	\end{figure}
	
	The related works described earlier do not explicitly and jointly take into account the sensor locations, the displacement between the sensor location and the location of the disruption source, and potential sensor mobility. In particular, to the best of our knowledge, the problem of quickest detection in discrete time in the practically relevant scenarios where the change is caused by a gradually expanding spatially localized event(s) has not been addressed earlier. The most related previous work (\cite{RAGHAVAN, KURT_WANG, LAI_QUICKEST_MARKOV, ROVATSOS_SEQUENTIAL,ROVATSOS_ARXIV,ROVATSOS_ICASSP2020}) focuses on settings where the dynamics of the change-event are modeled as movement from \emph{sensor(s)-to-sensor(s)}. The particular assumed sensor network topology (e.g. an array \cite{RAGHAVAN} or a graph \cite{ROVATSOS_SEQUENTIAL}) results jointly from the spatial properties of the event, and the placement of the sensors. The event is then assumed to affect the sensors according to a model specified by this topology.
In contrast, in this paper we consider potentially mobile sensor systems with completely arbitrary displacements and no regular sensing geometry. Therefore it is not in general possible to describe the propagation of the event with any fixed network topology or model. Whether a particular sensor is affected depends on its location relative to the source of the physical event, and the sensor locations may be arbitrary and change over time. Hence, the previous works cannot directly be applied to quickest detection of spatially phenomena emanating from a distinct and potentially unknown location, especially when sensor locations can vary in time.


	The contributions of this paper are:
	\begin{itemize}
		\item We propose a dynamic programming framework for deriving a stopping time that exploits the spatial information and minimizes the average detection delay (ADD) under a constraint on the false alarm probability. An optimal detection procedure that minimizes the associated Bayes risk is derived. 
		\item As the optimal procedure is hard to implement in practice, we propose a more practical detection procedure based on thresholding the posterior probability of the change point having occurred. To theoretically justify the use of this simpler procedure, it is shown that the procedure is a limiting form of the optimal procedure in the rare event regime. A convenient recursive formula for computing the posterior probability is developed by using the radial change propagation pattern of the spatial event.
		\item The probability of false alarm (PFA) control of the thresholding procedure is established and its ADD is analyzed in the asymptotic regime. Conditions under which the threshold procedure achieves asymptotic optimality are provided. Furthermore, we analyze the procedure in the specific setting of quickest detection of attenuating random signals. An approximation for asymptotic ADD is provided, and asymptotic optimality is established.
		\item We extend the single-event detection procedure to the detection of multiple statistically independent spatial events in parallel by combining the developed posterior probability threshold procedure with a Multiple Hypothesis Testing (MHT) setup. The rate of false alarms is controlled using the False Discovery Rate (FDR) criterion, which is widely used for MHT \cite{CHEN_ZHANG_POOR_BAYESIAN_JOURNAL,BENJAMINI_HOCHBERG,EFRON_BOOK}. This is highly relevant in many modern applications where high dimensional data must be processed in parallel and there may be multiple events taking place at the same time \cite{CHEN_ZHANG_POOR_NON_BAYESIAN,CHEN_ZHANG_POOR_BAYESIAN_JOURNAL,NITZAN_TSP_BAYESIAN}. It is shown that the proposed parallel procedure strictly controls the FDR level.
		\item Simulations are conducted to verify the theoretical findings. It is clearly demonstrated that exploiting the spatial properties of the event decreases the ADD compared to procedures that do not utilize this information in both single and multiple cluster settings. This benefit is achieved even under model mismatch. It is demonstrated that the gain in performance is the largest when the event propagates sufficiently slowly compared to the sampling rate, or when the sensor displacements are large, and/or when the pre- and post-change probability models are different enough.
	\end{itemize}
	
	Preliminary results of this paper appear in conference papers \cite{HALME_ICASSP2021} and \cite{ASILOMAR2021}. This paper is organized as follows. In Section \ref{sec:Model and problem formulation}, we formulate the quickest detection problem. A dynamic programming framework for the detection is formulated in Section \ref{sec:Dynamic programming for optimal stopping time}. Under the radial change propagation setup, a change-point detection procedure and its extension to multiple parallel change-point detection are presented in Sections \ref{sec:Single change-point detection procedure for radial propagation} and \ref{sec:Extension to multiple change-point detection}, respectively. Our simulations and conclusions appear in Sections \ref{sec:Numerical simulations} and \ref{sec:Conclusion}, respectively.

	\subsection*{Notation}
	Scalar random variables are denoted by normal font capital letters, with the exception of the change point $t$, which is also a random variable. Scalar constants, such as realizations of random variables, are denoted by normal font lowercase letters, with the exception of $L$, $M$, $N$, $K$ and $\mfR$, which are constant integers, and $U_{n,m,r}$ which denotes an event. Boldface uppercase and lowercase letters are used for vector random variables and constants, respectively. For an integer $K$, we use $[K]$ to denote the set $\{0,1,...,K-1\}$ of cardinality $K$. 
	
	\section{Model and problem formulation}\label{sec:Model and problem formulation}
	We begin by describing the model for a single spatial change-point detection problem. The model, relevant terms, and notations for multiple change-point detection in parallel will be described in Section \ref{sec:Extension to multiple change-point detection}. Let $(\Omega,\mathcal{F},\Prob)$ denote a probability space, where $\Omega$ is the sample space, $\mathcal{F}$ is the $\sigma$-algebra generated by $\Omega$, and $\Prob$ is a probability measure. The expectation operator with respect to $\Prob$ is denoted by $\Ex$.\\
	\indent
	 At each time slot we have sensors in known locations but with arbitrary configuration within a domain of interest, $\mathcal{S}\subset\mathbb{R}^2$. The set of sensor locations at time slot $n$ is denoted by $\mathcal{A}_n$ and the corresponding number of sensors is $|\mathcal{A}_n|$. If $|\mathcal{A}_n|=0$, then there are no observations received by the FC at time slot $n$. Unless otherwise stated, in this paper the sensor locations are considered known and deterministic. 
	
	We consider a centralized setting where every sensor communicates its observations or local decision statistics to the FC. At time slot $n$, the data transmitted by the sensors and received by the FC are random variables $X_n^{(a)},~a\in\mathcal{A}_n$. The realization of $X_n^{(a)}$ is denoted by $x_n^{(a)}$. In mobile scenarios, the location information $a$ is communicated to the FC in addition to the observation value. Alternatively, the FC can have a capability to reliably estimate the locations of the sensors. Uncertainty in the location estimate could be represented as a probability distribution, which could be averaged over in a Bayesian framework. However, for the purposes of this paper we assume for simplicity that reliable point-estimates of the sensor locations exist. We define the $|\mathcal{A}_n|\times 1$ data vector $\Xmat_n$ that contains all the observations transmitted at time slot $n$, including the locations at which the observations were obtained by mobile sensors. It is assumed that at time slot $n$ the FC has access to the current and past observations, $I_n\define(\Xmat_1,\ldots,\Xmat_n)$, where $I_0$ is the empty set.\\
	\indent
	At a random time instant, $t$, a source becomes active and starts emitting a propagating signal/event from an unknown origin, $O$, causing a disruption in the domain of interest. It is assumed that the initial event time, $t$, has a geometric prior distribution with parameter, $\rho\in(0,1)$, i.e.
	\be\label{geom_prior}
	\Prob(t=m)=\rho(1-\rho)^{m},~m\in\mathbb{N}_0,
	\ee
	where $\mathbb{N}_0\define\mathbb{N}\bigcup\{0\}$ and $\mathbb{N}$ is the set of positive integers. The geometric prior distribution is very common in change-point detection because it is a mathematically convenient memoryless distribution, which is also relevant in a variety of practical applications  \cite{POOR_QUICKEST,TARTAKOVSKY_GENERAL,RAGHAVAN,KURT_WANG}.\\
	\indent
	We want to discover the initial event time, $t$, with minimal delay while controlling the PFA. In a multiple change-point setup, as will be described in Section \ref{sec:Extension to multiple change-point detection}, the PFA is replaced by the FDR criterion in a MHT framework. Hereafter, we refer to $t$ as the change point even though it may not cause an instantaneous change in the received observations as in classic change-point detection, due to the fact that the sensors are in distinct locations and displaced from the signal source. 
	\indent
	
	In order to reduce the complexity of the problem, we initially assume that $O \in \mathcal{O}\subset\mathcal{S}$ where $\mathcal{O}\define \{o_0,\ldots,o_{M-1}\}$ is a finite set of possible source or emitter locations in $\mathcal{S}$ with cardinality $|\mathcal{O}|=M$. We assume that the initial event can occur in any of the possible locations, $o_m\in\mathcal{O}$, with equal probability
	\begin{equation*}
	\Prob(O = o_m) = \frac{1}{M}.
	\end{equation*}
The assumption of a uniform probability distribution is made for simplicity and is not necessary for the following derivations. In practical settings it may not be realistic to expect that the source location can only appear within a known finite set of points. However, choosing the set $\mathcal{O}$ to be a sufficiently dense discretization of the 2D-plane can allow one to approximate the continuous field well and reduce the complexity, as is demonstrated in the simulation section. In this work the true location is initially assumed to lie within a known finite set in order to facilitate a dynamic programming solution. Additional signal processing may be applied to obtain point estimates of the location. That thoroughly studied source localization topic is outside the scope of this paper. \\
	
	The chosen sampling rate and duration of the discrete time slot used in acquiring the observations can highly affect the sensitivity of the network to the spatial event, and the time resolution and delay of detecting the change. Generally, the time-domain sampling rate should be selected so that one can distinguish among differences in the disruption arrival times at different sensors. In the considered model, if the event propagates with a constant radial velocity, we model the sampling rate such that during each time slot the radius of the disruption area increases by a fixed unit, e.g. some fraction or multiple of the wavelength. In order to take a variety of random propagation effects into account, we allow some randomness in the propagation of the spatial event. For example, epidemic spread may have a high degree of stochasticity due to random movements and interactions among individuals \cite{LUDKOVSKI}. Generally, propagation randomness can be due to randomness in the velocity \cite[Eq. (6.8)]{LUDKOVSKI}, due to timing jitter \cite{LOVELACE_JITTER}, or due to reflections, non-homogenous medium, scattering, and multipath \cite{RADIO_BOOK}.\\
	\indent
	Let $R_n$ denote the area radius of the propagating event at time slot $n$. For simplicity, we assume that the area radius can have only discrete integer values corresponding to a fixed distance unit. Let $\mathfrak{R} \in \mathbb{N}$ denote the smallest disruption area radius that covers the entire domain of interest, $\mathcal{S}$, regardless of the actual point of origin, $O\in\mathcal{O}$. Thus, we assume that $R_n\in [\mathfrak{R}+1]$. It is assumed that $R_n = 0$ when $n < t$, and $R_t = 1$,
	i.e. only when the initial change occurs, the event area radius expands by one unit. In addition, we assume that
	\be\label{R_n_1_to_R_n}
	\Prob(R_{n} = r+1 | R_{n-1} = r) = 1 - \Prob(R_{n} = r | R_{n-1} = r) = \rho_1,
	\ee
	$\forall r\in[\mfR]\setminus\{0\},~n\in\mathbb{N}$. At each time slot after the initial change the radius of the disruption increases by one radius unit with probability $\rho_1\in(0,1]$ and stays the same as in the previous time slot with probability $1-\rho_1$. As $\mfR$ is the maximum radius of the affected region, if $R_n = \mfR$ then $R_m = \mfR$ for $m \geq n$.
	Using \eqref{geom_prior}, we obtain
	\be\label{R_0_1_to_R_0}
	\begin{split}
		\Prob(R_{n} = 1 | R_{n-1} = 0) &= 1 - \Prob(R_{n} = 0 | R_{n-1} = 0)\\ 
		&= \Prob(t=n|t\geq n) = \rho,~n\in\mathbb{N}.
	\end{split}
	\ee
	In addition, we obtain $\Prob(R_{0} = 0)=1-\rho$ and $\Prob(R_{0} = 1)=\rho$.\\
	\indent
	At each sensor location it is assumed that the sensor observes the disruption only if the disruption is present in this location, i.e. the distance between the sensor location and the source location is smaller than the current area radius of the disruption. Assume that the disruption is emanating from a source at $O=o_m$. Then, if a sensor at location $a\in\mathcal{S}$ is not exposed to the disruption, it acquires a noise-only observation coming from a known null probability density function (pdf), $f_0$. Otherwise, if this sensor is exposed to the disruption, it receives an observation with known pdf, $f_1^{(a,o_m)}$, that may depend on $a\in\mathcal{S}$ and $o_m\in\mathcal{O}$. For example, the power of the received signal can affect the parameters of the alternative pdf, and due to path loss may depend on the displacement between the sensor and the source \cite{RADIO_BOOK,CHITTE_RSS}. In many applications, the $f_0$ density represents random noise only, the statistical properties of which can be either known from theory, or estimated from training data even locally for each sensor in the absence of signal. On the other hand, the exact $f_1$ distribution, influenced by the appearing signal, may not always be known in practice. The issue of dealing with uncertainty in the $f_0$ and $f_1$ distributions has been an active topic of research in the field of quickest detection, see for example \cite{UNNIKRISHNAN, TARTAKOVSKY_UNKNOWN} and references therein. Therefore, in this work we consider the probability models to be known, and refer to the existing literature for solutions on handling any model uncertainty. 
 Moreover, it will be observed in Section \ref{sec:Single change-point detection procedure for radial propagation} that for the PFA control it suffices to know only the $f_0$ distribution. In many detection problems, controlling the false positives is crucial so that the system is not overwhelmed with detections and subsequent tasks. Conditional on the true system state and the sensor locations, the observations at each time slot are assumed to be independent across the sensors, as well as independent of all previous observations. Since the individual sensors are distributed and in distinct locations, the sensor noise present in any physical measurement can be considered independent. \\
	\indent
	At each time slot, the FC decides whether the initial event has taken place or not based on the information, $I_n$, which is available at time slot $n$. To this end, it uses a stopping time, $T$, according to a predefined stopping rule. The delay in detection is quantified by the ADD,
	\be\label{ADD_margin}
	{\text{ADD}}(T)\define{\mathbb{E}}[(T-t)^+],
	\ee
	where $x^+ \define\max\{0,x\}$. The PFA is defined as
	\be\label{PFA}
	{\text{PFA}}(T)\define \Prob(T<t).
	\ee
	\section{Dynamic programming for optimal stopping time}\label{sec:Dynamic programming for optimal stopping time}
	In a similar manner to classic Bayesian change-point detection \cite{TARTAKOVSKY_GENERAL}, our goal is to derive the stopping time
	\be\label{radial_optimization}
	T_{\text{opt}}=\arg\underset{T\in\Delta_\alpha}{\inf}{\text{ADD}}(T),
	\ee
	where $\Delta_\alpha\define\{T:{\text{PFA}}(T)\leq\alpha\}$. Put into words, we want to find a stopping time with the smallest ADD among stopping times for which the PFA is not larger than $\alpha$, where $\alpha \in (0,1)$ is a predefined tolerated level of false alarms. In this section, we take a dynamic programming approach for solving \eqref{radial_optimization}. 
	
	\subsection{Finite horizon}
	We begin by restricting the stopping time to a finite horizon $[0, N]$. Solving the constrained optimization problem in \eqref{radial_optimization} can be approached by formulating a Lagrangian relaxation problem that minimizes the Bayes risk
	\begin{equation}\label{bayes_risk}
	B(T, c) \define  \Prob(T < t) + c\cdot\Ex[(T - t)^+]
	\end{equation}
	over all admissible stopping times. 
	The state of the system at time $n$ is denoted by $S_n \in \{(m, r) : ~m \in [M],~r \in [\mfR+1]\}\cup\Upsilon$, with $S_n = (m,r)$ meaning that at time $n$ the event originating from $O=o_m$ has radius $R_n = r$. The term $\Upsilon$ represents the terminal state that the system goes into after a change is declared. In case $R_n = 0$, the spatial event has not occurred yet. From the description in Sec. \ref{sec:Model and problem formulation} it is clear that the system state $S_n$ evolves as a Markov process. Moreover, conditional on the system state and sensor locations, the observations are i.i.d. As such, the problem lends itself to a dynamic programming solution.\\
	\indent
	\indent
	Since $\{T < t\} \Leftrightarrow \{R_T = 0\}$, the Bayes risk in \eqref{bayes_risk} can be expressed in additive form as \cite{RAGHAVAN,KURT_WANG}
	\begin{equation}\label{finite_horizon_eq}
	B(T, c) =  \Prob\left(R_T = 0\right) + c \cdot \Ex\left[\sum_{n = 0}^{T-1}\Prob\left(R_n \geq 1\right)\right].
	\end{equation}
	In a finite horizon, we denote the minimum expected cost-to-go from $n$ to $N$ by $\mJ_n^N(I_n)$, which is in general a function of all available information $I_n$ at time $n$. The cost-to-go function obeys the backwards recursion
	\be\label{general_bellman}
	\begin{split}
		&\mJ_n^N(I_n) = \\
		&\min\big\{\Prob(R_n = 0 | I_n), c\cdot \Prob(R_n \geq 1 | I_n)+ \Ex [\mathcal{J}_{n+1}^N(I_{n+1}) |I_n]\big\},
	\end{split}
	\ee
	with 
	\begin{equation}\label{final_bellman}
	\mJ_N^N(I_N) = \Prob(R_N = 0 | I_N).
	\end{equation}
	In \eqref{general_bellman} the first term inside the minimum corresponds to the expected cost of stopping at $n$, and the second term denotes the expected cost of continuing the monitoring process. We denote the posterior probability of the event $\{S_n = (m,r)\}$ given $I_n$ by $p_{n,m,r}$. That is,
	\begin{equation}
	p_{n,m,r}  \define \Prob(O = o_m, R_n = r| I_n),
	\end{equation}
	and
	\begin{equation}
	\bm{p}_n \define [p_{n,0,0},...,p_{n,1,\mfR},p_{n,2,0},...,p_{n,2,\mfR},...,p_{n,M-1,\mfR}] 
	\end{equation} 
	is a $M \cdot (\mathfrak{R} + 1)$ dimensional vector that collects all of the probabilities of time $n$.
	In the next subsection, we present a recursive update formula for $\bm{p}_n$ that will be used in the dynamic programming solution.
	
	\subsection{Posterior probabilities computation}\label{subsec:posterior}
	At any time slot, $n$, the sample space of the considered setup, $\Omega$, can be partitioned as
	\be\label{Omega_partition}
	\Omega = \bigcup_{m=1}^M \bigcup_{r=0}^{\mfR} U_{n,m,r},
	\ee
	where 
	\be\label{event_partition}
	U_{n,m,r} \define \{O = o_m, R_n = r\} = \{S_n = (m,r)\}
	\ee
	are pairwise disjoint events and the events $\{O = o_m\}$ and $\{ R_n = r\}$ are independent. In the following, we derive a convenient recursive formula for computing $p_{n,m,r} = \Prob(U_{n,m,r} | I_n)$. Repeated use of the Bayes rule allows us to write $p_{n,m,r}$ as
	\be\label{Bayes}
	p_{n,m,r} = \frac{f(\xvec_n | U_{n,m,r})\Prob(U_{n,m,r}|I_{n-1})}{\sum_{l=1}^M \sum_{\tilde{r}=0}^{\mfR} f(\xvec_n | U_{n,l,{\tilde{r}}})\Prob(U_{n,l,{\tilde{r}}} | I_{n-1})}.
	\ee
	Given the conditional indepedence of the observations we have the factorization
	\be\label{observations_for_recursion}
	\begin{split}
		&f(\xvec_n | U_{n,m,r}) \\
		&= \prod_{a\in\mathcal{A}_n: \lVert a -  o_m\rVert < r}f_1^{(a,o_m)}(x_n^{(a)}) \prod_{a\in\mathcal{A}_n: \lVert a -  o_m\rVert \geq r} f_0(x_n^{(a)}).
	\end{split}
	\ee
	In addition, according to the assumed propagation model, $R_{n-1}$ can only be equal to $R_{n}$ or less than $R_{n}$ by one. Therefore, by the law of total probability, Bayes rule, and \eqref{R_n_1_to_R_n}-\eqref{R_0_1_to_R_0}, we can write
	\be
	\begin{split}
		\Prob(U_{n,m,r}|I_{n-1})&=\Prob(U_{n,m,r}|U_{n-1,m,r-1})p_{n-1,m,r-1}\\
		&~~~+\Prob(U_{n,m,r}|U_{n-1,m,r})p_{n-1,m,r}.
	\end{split}
	\ee
	The conditional probabilities that the radius increases by one radius unit during one time slot for different radius values are 
	\begin{equation*}
	\Prob(U_{n,m,r}|U_{n-1,m,r-1})=\rho_1, ~~ r\in [\mfR+1]\setminus\{0,1\}, 
	\end{equation*}
	and $\Prob(U_{n,m,1}|U_{n-1,m,0})=\rho$. The conditional probabilities that the radius stays the same during one time slot for different radius values are $\Prob(U_{n,m,\mfR}|U_{n-1,m,\mfR})=1$,
	\begin{equation*}
	\Prob(U_{n,m,r}|U_{n-1,m,r})=1-\rho_1,~\forall r\in [\mfR+1]\setminus\{0,\mfR\}, 
	\end{equation*}
	and $\Prob(U_{n,m,0}|U_{n-1,m,0})=1-\rho$. At $n=0$, we obtain 
	\begin{equation*}
	\Prob(U_{0,m,r})=0,~\forall r\in [\mfR+1]\setminus\{0,1\}, 
	\end{equation*}
	$\Prob(U_{0,m,1})=\frac{1}{M}\rho$, and $\Prob(U_{0,m,0})=\frac{1}{M}(1-\rho)$. In particular, it is seen that $\Prob(U_{n,m,r}|U_{n-1,m,r-j}),~j=0,1$, is independent of $n$. It should be noted that the radius of the area of the spatial event can reach a radius $r$ no earlier than time slot $n=r-1$.\\
	\indent
	At time $n$, the probabilities $\bm{p}_{n}$ can be updated using only the probabilities at the previous time step $\bm{p}_{n-1}$, current observation vector $\xvec_n$, and prior information. Thus, even as data accumulates with time, the amount of computations required for computing $\bm{p}_{n}$ remains constant per time slot. In particular, at time slot $n$ the amount of computations required for computing $\bm{p}_{n,m,r}$ is $\mathcal{O}(M\mfR)$.
	\indent
	
	It is observed that $\bm{p}_n$ depends on $I_{n-1}$ only through $\bm{p}_{n-1}$ and by \eqref{final_bellman} we have $\mathcal{J}_N^N(I_N) = \mathcal{J}_N^N(\bm{p}_N)$. Then, a simple induction argument shows that $\bm{p}_n$ is a sufficient statistic for the program, i.e. the minimum expected cost-to-go from $n$ to $N$ can be expressed as a function of $\bm{p}_n$, and thus $\mJ_n^N(I_n) = \mathcal{J}_n^N(\bm{p}_n)$. We denote the posterior probability of the event having radius $r$ at time $n$ by
	\be\label{posterior_cp}
	\pi_{n,r} \define  \Prob(R_n = r | I_n) = \sum_{m \in [M]} p_{n,m,r}.
	\ee
	The Bellman equations from \eqref{general_bellman} and \eqref{final_bellman} can then be expressed as 
	\begin{equation}\label{J_n_N_form}
	\mathcal{J}_n^N(\bm{p}_n) = \min(\pi_{n,0},~ c(1-\pi_{n,0}) + \mathcal{D}_n^N(\bm{p}_n)),
	\end{equation}
	and 
	\begin{equation}
	\mathcal{J}_N^N(\bm{p}_n) = \pi_{N,0},
	\end{equation}
	where 
	\begin{equation}
	\mathcal{D}_n^N(\bm{p}_n) \define \Ex[\mathcal{J}_{n+1}^N(\bm{p}_{n+1}) | I_n],
	\end{equation}
	can be expressed as a function of $\bm{p}_n$ similarly to \cite{RAGHAVAN,KURT_WANG}.
	
	\subsection{Extension to infinite horizon}
	In this subsection, we remove the upper bound on $T$, and consider the case $N \to \infty$. We write $\mathcal{J}_n \define \lim_{N \to \infty} \mathcal{J}_n^N$ for the cost-to-go function in the limit and similarly $\mathcal{D}_n \define \lim_{N \to \infty} \mathcal{D}_n^N$. The limits are well defined, since $0 \leq \mathcal{J}_n^N(\bm{p}_n) \leq 1$ and $\mathcal{J}_n^N(\bm{p}_n) \geq \mathcal{J}_n^{N+1}(\bm{p}_n)$ for all $\bm{p}_n, n$ and $N$. 
	Therefore, we obtain
	\begin{equation}
	\mathcal{J}_n(\bm{p}_n) = \min(\pi_{n,0}, c(1-\pi_{n,0}) + \mathcal{D}_n(\bm{p}_n)) \quad n\in\mathbb{N}_0,
	\end{equation}
	where $\mathcal{D}_n$ and $\mathcal{J}_n$ are non-negative functions on the $M \times (\mfR+1)$-dimensional simplex.
	It is then seen that the optimal stopping time $T_\text{opt}$ is of the form
	\begin{equation}\label{optimal_stop}
	T_\text{opt} = {\inf}\left\{n\in\mathbb{N}_0 : \pi_{n,0} < c(1-\pi_{n,0}) + \mathcal{D}_n(\bm{p}_n)\right\},
	\end{equation}
	where the change is declared the first time the posterior probability of the event not being present drops below $c(1-\pi_{n,0}) + \mathcal{D}_n(\bm{p}_n)$. In general, the structure of $\mathcal{D}_n$ is not explicitly known, hence no closed-form optimal solution exists. Furthermore a numerical approximation of the optimal stopping time is computationally challenging and may be hard to analyze.\\
	\indent
	An interesting special case is the regime where the initial disruption is a rare event, i.e. $\rho \to 0$. The following result establishes that in this scenario the optimal test $T_\text{opt}$ converges in probability to a simple threshold test on $\pi_{n,0}$ which provides an attractive solution for practical use.
	\begin{Theorem}\label{TRP_CONV}
		The optimal stopping rule in \eqref{optimal_stop} converges in probability to a threshold test
		\begin{equation}\label{tau_Q}
		T_Q \define {\inf}\{n\in\mathbb{N}_0 : \pi_{n,0} \leq Q\},
		\end{equation}
		for a properly chosen $Q$ as $\rho \to 0$.
	\end{Theorem}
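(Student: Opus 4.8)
The plan is to show that, in the rare-event limit, the state-dependent continuation cost $\mathcal{D}_n(\bm{p}_n)$ appearing in \eqref{optimal_stop} collapses onto a function of the single coordinate $\pi_{n,0}$, so that the stopping region degenerates to the half-space $\{\pi_{n,0}\le Q\}$ of \eqref{tau_Q}. First I would record the one exact identity that drives everything. Since $\{R_{n+1}=0\}\subseteq\{R_n=0\}$ and the radius transition is Markov with $\Prob(R_{n+1}=0\mid R_n=0,I_n)=1-\rho$, the tower property gives $\Ex[\pi_{n+1,0}\mid I_n]=\Prob(R_{n+1}=0\mid I_n)=(1-\rho)\pi_{n,0}$. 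Because $\mathcal{J}_{n+1}(\bm{p}_{n+1})\le\pi_{n+1,0}$ with equality exactly on the stopping set, this yields the exact decomposition $\mathcal{D}_n(\bm{p}_n)=(1-\rho)\pi_{n,0}-\Ex[g_{n+1}(\bm{p}_{n+1})\mid I_n]$, where $g_{n+1}(\bm{p}_{n+1}):=\pi_{n+1,0}-\mathcal{J}_{n+1}(\bm{p}_{n+1})\ge 0$ is the option value of being allowed to continue. Substituting into \eqref{optimal_stop}, the stopping condition takes the equivalent form: stop iff $\rho\,\pi_{n,0}+\Ex[g_{n+1}(\bm{p}_{n+1})\mid I_n]\le c\,(1-\pi_{n,0})$, in which the genuinely $\rho$-vanishing term $\rho\,\pi_{n,0}$ has been isolated.

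Next I would try to sandwich the boundary. The crude bounds $0\le\mathcal{D}_n(\bm{p}_n)\le(1-\rho)\pi_{n,0}$ give directly that $T_{c/(c+\rho)}\le T_{\text{opt}}\le T_{c/(1+c)}$, so $T_{\text{opt}}$ is trapped between two genuine threshold tests. This crude interval, however, widens to $[\,c/(1+c),\,1\,)$ as $\rho\to 0$ and therefore does \emph{not} collapse: the trivial upper bound ignores the option value, i.e. the possibility of continuing to avoid stopping on a false alarm. The crux, and the main obstacle, is thus to replace the trivial upper bound by a sharp one capturing $\Ex[g_{n+1}(\bm{p}_{n+1})\mid I_n]$ and to show that this quantity, along the trajectories that matter, depends on $\bm{p}_n$ asymptotically only through $\pi_{n,0}$. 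I would exploit two structural facts: (i) $\mathcal{J}_n$, and hence $\mathcal{D}_n$, are concave in the belief $\bm{p}_n$ (the standard concavity of optimal-stopping value functions in the posterior), and (ii) conditioned on $\{R_n\ge 1\}$, as $\rho\to0$ the posterior over the origin $o_m$ and radius $r$ concentrates once any exposed sensor has contributed $f_1$-evidence, because the per-slot log-likelihood increments are lower-bounded by the positive Kullback--Leibler divergences between $f_1^{(a,o_m)}$ and $f_0$. Together these should yield $\mathcal{D}_n(\bm{p}_n)=D(\pi_{n,0})+o_\rho(1)$ uniformly on compact subsets of the simplex bounded away from $\pi_{n,0}=1$, where $D$ is the limiting continuation-cost profile; the boundary then solves the scalar fixed point $Q=\frac{c+D(Q)}{1+c}$, with a unique root by monotonicity of $D$, and this root is the desired $Q$.

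Finally I would upgrade boundary convergence to convergence in probability of the stopping times, which is where the rare-event geometry is decisive. Before the change, $\pi_{n,0}=\Prob(t>n\mid I_n)$ stays within $o_\rho(1)$ of $1$, while after the change the log-odds $\log\frac{\pi_{n,0}}{1-\pi_{n,0}}$ exhibits a strictly negative drift governed by the same exposed-sensor KL divergences, so $\pi_{n,0}$ plunges through any fixed sub-unit level without lingering. Hence the first-passage time of $\pi_{n,0}$ across the limiting boundary is insensitive to $o_\rho(1)$ perturbations of the threshold, forcing $\Prob(T_{\text{opt}}\neq T_Q)\to 0$; a short argument that pre-change crossings of the (bounded-away-from-one) level $Q$ are asymptotically negligible, using the PFA bound established later for $T_Q$, closes the claim.

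I expect the decisive difficulty to be step (ii): the uniform collapse of $\mathcal{D}_n$ onto a function of $\pi_{n,0}$ alone, since it requires controlling the option value uniformly across the off-$\pi_{n,0}$ coordinates of the belief, whereas the clean-crossing argument is comparatively routine once the post-change negative drift of the log-odds has been quantified.
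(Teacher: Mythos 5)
Your opening moves are sound and in fact coincide with the paper's own first step: the identity $\Ex[\pi_{n+1,0}\mid I_n]=(1-\rho)\pi_{n,0}$, the exact decomposition $\mathcal{D}_n(\bm{p}_n)=(1-\rho)\pi_{n,0}-\Ex[g_{n+1}(\bm{p}_{n+1})\mid I_n]$, and the rearranged stopping condition are precisely the paper's definition $\Psi_n\define\mathcal{D}_n(\bm{p}_n)-(1-\rho)\pi_{n,0}$ in \eqref{psieq} and the rewriting of $T_{\text{opt}}$ as \eqref{threshold_structure}, with your option value $\Ex[g_{n+1}\mid I_n]$ equal to $-\Psi_n$. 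Your sandwich $T_{c/(c+\rho)}\le T_{\text{opt}}\le T_{c/(1+c)}$ is also correct, and you correctly identify that everything hinges on controlling the option-value term.

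The gap is in how you propose to control it, and it is fatal: you have misplaced the limiting threshold. From \eqref{threshold_structure} the candidate threshold is $(c+\Psi_n)/(c+\rho)$, so the ``properly chosen $Q$'' is $Q=c/(c+\rho)$, which tends to $1$ as $\rho\to 0$; equivalently, the boundary in terms of the change posterior $1-\pi_{n,0}$ sits at height $\Theta(\rho)$, which is also the scale at which $1-\pi_{n,0}$ fluctuates before the change. Asymptotically, $T_{\text{opt}}$ coincides with the \emph{lower} end of your sandwich, $T_{c/(c+\rho)}$, not with a point strictly inside it. This has two consequences for your plan. First, the precision you aim for is too coarse: in the stopping inequality $\rho\,\pi_{n,0}+\Ex[g_{n+1}\mid I_n]\le c(1-\pi_{n,0})$ every term is $O(\rho)$ near the boundary, so the continuation value must be resolved to $o(\rho)$ --- this is exactly the paper's requirement $\Psi_n/\rho\to 0$ --- whereas your approximation $\mathcal{D}_n=D(\pi_{n,0})+o_\rho(1)$ perturbs the threshold $(c+\Psi_n)/(c+\rho)$ by $o(1)/(c+\rho)$, which dwarfs the width $\rho/(c+\rho)$ of the entire decision region (it could even push the threshold above $1$). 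Second, your uniformity is claimed ``on compact subsets bounded away from $\pi_{n,0}=1$,'' which for small $\rho$ excludes precisely the $O(\rho)$-neighbourhood of $\pi_{n,0}=1$ where the boundary, the pre-change fluctuations, and the crossing all live. Your fixed point $Q=(c+D(Q))/(1+c)$ with a root bounded away from $1$ therefore describes the wrong object: for any fixed $Q<1$, the test $T_Q$ waits until the change posterior exceeds the constant $1-Q$, which post-change takes an extra $\approx|\log\rho|/\bigl(q+|\log(1-\rho)|\bigr)\to\infty$ slots beyond $T_{\text{opt}}$ (the posterior must climb from its pre-change level $\Theta(\rho)$), so $\Prob(T_{\text{opt}}\ne T_Q)$ does not vanish; the same scale mismatch invalidates your clean-crossing step, since the relevant crossing is of a level at distance $\Theta(\rho)$ from $1$, not of a fixed sub-unit level. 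The missing ingredient --- and the paper's actual device, following \cite[Th.~2]{RAGHAVAN} --- is the rescaling $q_{n,r}=\pi_{n,r}/(\rho\pi_{n,0})$, which magnifies that $O(\rho)$-neighbourhood into order-one coordinates and makes the sharp estimate $\Psi_n=o(\rho)$ provable there; concavity of $\mathcal{J}_n$ plus posterior concentration, as you propose, does not yield this rate.
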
 
	\begin{proof}
	See Appendix \ref{proof:TRP_CONV}.
	\end{proof}
	
	\noindent
	In the following section, we propose a procedure denoted as the radial propagation (RP) procedure for single change-point detection, which is based on the threshold test from \eqref{tau_Q}.
	
	\section{Single change-point detection procedure for radial propagation}\label{sec:Single change-point detection procedure for radial propagation}
	In the previous section, it was observed that in the limit $\rho \to 0$, the optimal Bayesian stopping rule converges to a simple threshold rule $T_Q$, defined in \eqref{tau_Q}. In this subsection, we study the performance of this stopping rule for any $\rho$. 
	
	The following proposition provides an upper bound for the false alarm of probability of $T_Q$.
	\begin{proposition}\label{PROP_PFA}
		The false alarm probability of $T_{\text{Q}}$ from \eqref{tau_Q} can be upper bounded with ${\text{PFA}}(T_{Q})\leq Q$.
	\end{proposition}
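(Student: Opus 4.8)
The plan is to convert the false alarm probability into a statement about the stopped posterior $\pi_{T_Q,0}$ and then exploit the first-passage inequality that defines the rule. First I would invoke the equivalence $\{T<t\}\Leftrightarrow\{R_T=0\}$ recorded in Section~\ref{sec:Dynamic programming for optimal stopping time}, so that
\[
{\text{PFA}}(T_Q)=\Prob(T_Q<t)=\Prob(R_{T_Q}=0,\,T_Q<\infty).
\]
The restriction to $\{T_Q<\infty\}$ costs nothing, since a false alarm requires the procedure to actually stop strictly before $t$, and $t<\infty$ almost surely under the geometric prior in \eqref{geom_prior}.

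Next I would decompose this probability according to the value attained by the stopping time,
\[
{\text{PFA}}(T_Q)=\sum_{n\in\mathbb{N}_0}\Prob(R_n=0,\,T_Q=n),
\]
and use the fact that $T_Q$ is a stopping time adapted to the information filtration $\{I_n\}$, so that each event $\{T_Q=n\}$ is $I_n$-measurable. Conditioning on $I_n$, applying the tower property, and recalling the definition $\pi_{n,0}=\Prob(R_n=0\mid I_n)$ from \eqref{posterior_cp} then gives
\[
\Prob(R_n=0,\,T_Q=n)=\Ex\bigl[\mathbf{1}_{\{T_Q=n\}}\,\Ex[\mathbf{1}_{\{R_n=0\}}\mid I_n]\bigr]=\Ex\bigl[\mathbf{1}_{\{T_Q=n\}}\,\pi_{n,0}\bigr].
\]

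Finally, on the event $\{T_Q=n\}$ the stopping rule \eqref{tau_Q} guarantees $\pi_{n,0}\leq Q$, so each summand is bounded by $Q\,\Prob(T_Q=n)$, and summing over $n$ yields
\[
{\text{PFA}}(T_Q)\leq Q\sum_{n\in\mathbb{N}_0}\Prob(T_Q=n)=Q\,\Prob(T_Q<\infty)\leq Q.
\]
I expect the only delicate point to be the measurability step: justifying that the indicator $\mathbf{1}_{\{R_n=0\}}$ may be replaced by its posterior $\pi_{n,0}$ on the stopping event, which is exactly where adaptedness of $T_Q$ to $\{I_n\}$ is used; the infinite-horizon case is handled for free because the sum runs over finite $n$ and $\Prob(T_Q<\infty)\leq 1$. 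I would also note that the bound uses only $\pi_{n,0}$ and never the alternative densities $f_1^{(a,o_m)}$, consistent with the earlier remark that PFA control requires knowledge of $f_0$ alone.
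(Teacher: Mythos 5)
Your proof is correct and follows essentially the same route as the paper's: both reduce ${\text{PFA}}(T_Q)=\Prob(R_{T_Q}=0)$ to an expectation of the stopped posterior $\pi_{T_Q,0}$ and then invoke the threshold condition $\pi_{T_Q,0}\leq Q$ built into the definition of $T_Q$. The only difference is presentational — the paper conditions on $I_{T_Q}$ in one step via the law of iterated expectations, whereas you decompose over $\{T_Q=n\}$ and condition on each $I_n$, which is a more careful unpacking of the same argument (and handles the $\{T_Q=\infty\}$ event explicitly, which the paper glosses over).
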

	\begin{proof}
		By combining \eqref{PFA} and \eqref{finite_horizon_eq}, one obtains
		\begin{equation*}
		\begin{split}
		{\text{PFA}}(T_Q) &= \Prob(R_{T_Q} = 0) = \Ex\left[\Prob(R_{T_{Q}} = 0 |I_{T_{Q}})\right] \\
		&= \Ex[\pi_{T_Q,0} | I_{T_Q}] \leq Q,
		\end{split}
		\end{equation*}
		where the second equality is obtained using the law of iterated expectations, the third equality is obtained from the definition of $\pi_{n,0}$ in \eqref{posterior_cp} and the inequality from the definition $T_Q$.
	\end{proof}
	\indent
	\begin{remark}
		\normalfont It should be noted that the PFA upper bound of Proposition 2 is valid even if many of the model assumptions are violated. As $T$ is a stopping time, $\{T < t\} \in I_{t-1}$. As all observations in $I_{t-1}$ are generated from the pre-change model, it is clear from the definition of the probability of false alarm in (5) that the PFA depends only on the pre-change observations. Violations of the assumed post-change behavior, such as a misspecified $f_1$ or departures from the assumed propagation model do not impact the PFA. This is a useful property, since the post-change distributions (usually generated by signal + noise) are often more difficult to characterize than the pre-change (noise only), as training data may be available from the pre-change probability model only.
	\end{remark}
	
	From here on, we refer to the stopping time $T_Q$ as the radial propagation (RP) procedure, where the stopping threshold is chosen to equal the false alarm upper bound $\alpha$,
	\begin{equation}\label{key}
	T_\text{RP}  \define\inf\{n \in \mathbb{N}_0 : \pi_{n,0} \leq \alpha\}.
	\end{equation}
	
	\subsection{Asymptotic optimality}
	In full generality, the ADD of the RP procedure is tedious to analyse due to the unknown source origin point, the potential mobility of the sensors and their arbitrary locations at each time slot. In order to shed some light on the ADD of the RP procedure, we provide sufficient conditions under which the RP procedure is asymptotically optimal in the vanishing PFA regime $\alpha\to 0$.\\
	\indent
	In the asymptotic analysis we consider the case where the disruption propagates in a deterministic fashion with constant velocity, i.e. $\rho_1 = 1$, so that its area radius increases by one unit in each time slot up to the maximum radius, $\mfR$. The observations $\Xmat_n$ are conditionally independent with pre-change pdf $f_0$ and post-change pdf $f^{(a, o_m)}_1$, respectively, where $f_1^{(a, o_m)}$ may depend on the sensor location $a$ and source location $o_m$. 
	To proceed, let us define for all $n$ the set $\eta(n,k, m) \define \{a \in \mathcal{A}_n : \lVert a-o_m \rVert < n-k+1\}$ that 
	contains the locations of sensors that observe the event at time $n$ assuming it took place at time $k$ at origin $m$. Since the event propagates with constant velocity, 
	on $\{t=k, O = o_m\}$, the joint density of the observations received at time $n \geq k$ by the FC is
	\begin{equation}\label{joint_likelihood}
	f_{1,k,m}(\xvec_n)\define\prod_{a \in \eta(n,k,m)} f_1^{(a, o_m)}(x_n^{(a)}) \prod_{a \notin \eta(n,k,m)}f_0(x_n^{(a)}),
	\end{equation}
	with the factorization given is a result of the conditional independence of the sensor data.
	For $\{t=\infty\}$, at time $n$ the joint pdf of the observations received by the FC is $f_{0}(\xvec_n)\define\prod_{a \in \mathcal{A}_n}f_0(x_n^{(a)})$.
The log-likelihood ratio of the hypotheses $\{t = k, O = o_m\}$ and $\{t=\infty\}$ at time $n$ is:
	\be\label{Z_n_k}
	Z_{n}^{k,m} \define \sum_{i=k}^{n}\log\frac{f_{1,k,m}(\xvec_i)}{f_{0}(\xvec_i)},~k\leq n.
	\ee
	In order to analyze the asymptotic detection delay, some conditions on the long-term behaviour of the log-likelihood ratio process $Z_{n}^{k,m}$ are required. It is assumed that there exists some $q_m$ such that for every $k$ and $m$, on $\{t = k, O = o_m\}$
	\be\label{Z_convergence}
	\frac{1}{n}Z_{k+n}^{k,m} \longrightarrow q_m \quad \mbox{ almost surely}.
	\ee
	Note that if the post-change distribution is independent of location, i.e. $f_1^{(a, o_m)} = f_1$, and the number of sensors remains constant over time $|\mathcal{A}_n| = L$,  it follows from the strong law of large numbers, the i.i.d. assumption and finiteness of $\mfR$ that $q_m = L \cdot D(f_1 || f_0)$ for all $m$. The following Lemma provides an asymptotic lower bound for the ADD for any procedure $T$ that fulfills PFA($T$) $\leq \alpha$.
	\begin{lemma}\label{THEOREM_ADD_LOWER}
		Suppose that $\rho_1 = 1$  and that \eqref{Z_convergence} applies for all $m \in [M]$. Let $\Delta_\alpha\define\{T:\emph{PFA}(T)\leq\alpha\}$. Then,
		\be\label{OPT_ADD_LOWER_BOUND}
		\underset{T\in\Delta_\alpha}{\inf}{\emph{ADD}}(T) \geq \frac{1}{M} \sum_{m=0}^{M-1} \frac{|\log \alpha|}{q_m + |\log(1-\rho)|}(1 + o(1)),
		\ee
		where $o(1)\to 0$ as $\alpha \to 0$.
	\end{lemma}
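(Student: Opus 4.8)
The plan is to reduce \eqref{OPT_ADD_LOWER_BOUND} to a family of \emph{per-origin} converse bounds, each proved by the change-of-measure argument that is standard in Bayesian quickest detection. First I would decompose the delay over the source location, $\text{ADD}(T)=\frac{1}{M}\sum_{m=0}^{M-1}\Ex[(T-t)^+\mid O=o_m]$. Because a false alarm can occur only on pre-change data, which is generated by $f_0$ regardless of $m$ (cf.\ the Remark after Proposition~\ref{PROP_PFA}), the constraint passes to every conditional problem: $\Prob(T<t\mid O=o_m)=\text{PFA}(T)\le\alpha$ for all $m$. Hence it suffices to prove, for each fixed $m$, that $\Ex[(T-t)^+\mid O=o_m]\ge |\log\alpha|/(q_m+|\log(1-\rho)|)\,(1+o(1))$ and then average.

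Fix $m$ and abbreviate $\beta\define|\log(1-\rho)|$. For $\epsilon\in(0,1)$ put $N_\alpha\define(1-\epsilon)|\log\alpha|/(q_m+\beta)$. Since $\{(T-t)^+<N_\alpha\}=\{T<t\}\cup\{t\le T<t+N_\alpha\}$ and the first event contributes at most $\alpha$, it is enough to show $\Prob(t\le T<t+N_\alpha\mid O=o_m)\to 0$; the elementary bound $\Ex[X]\ge a\,\Prob(X\ge a)$ then gives $\Ex[(T-t)^+\mid O=o_m]\ge N_\alpha(1+o(1))$, and $\epsilon\downarrow 0$ finishes the per-origin estimate. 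Using the geometric prior \eqref{geom_prior} I would write $\Prob(t\le T<t+N_\alpha\mid O=o_m)=\sum_{k\ge0}\rho(1-\rho)^k\,\Prob_k(k\le T<k+N_\alpha)$, where $\Prob_k$ is the law with change at $k$ and origin $o_m$, and $\Prob_\infty$ the no-change law.

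The core estimate splits each summand at the level $C\define(1+\epsilon')q_m N_\alpha$ of the log-likelihood ratio $Z_T^{k,m}$ from \eqref{Z_n_k}. On $\{Z_T^{k,m}<C\}$, Wald's likelihood-ratio identity gives $\Prob_k(k\le T<k+N_\alpha,\,Z_T^{k,m}<C)=\Ex_\infty[e^{Z_T^{k,m}}\mathbf{1}\{\cdots\}]\le e^{C}\Prob_\infty(k\le T<k+N_\alpha)$. On the complement, the hypothesis \eqref{Z_convergence} upgrades (by the asymptotically linear growth of $Z^{k,m}$) to $\max_{0\le j<N_\alpha}Z_{k+j}^{k,m}/N_\alpha\to q_m$, so $\Prob_k(Z_T^{k,m}\ge C)\to 0$ for each $k$, and these $T$-independent tails are annihilated after summation by dominated convergence against $\rho(1-\rho)^k$. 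For the surviving term the decisive lever is the exact relation $\Ex_\infty[(1-\rho)^{T}]=(1-\rho)^{-1}\text{PFA}(T)\le(1-\rho)^{-1}\alpha$, obtained from $\text{PFA}(T)=\sum_n\Prob(t>n)\Prob_\infty(T=n)$ with $\Prob(t>n)=(1-\rho)^{n+1}$. Summing $e^{C}\Prob_\infty(k\le T<k+N_\alpha)$ against the prior and interchanging the order of summation, the geometric weights produce a factor $e^{\beta N_\alpha}$, so the whole main contribution is at most a constant times $e^{C}e^{\beta N_\alpha}\alpha=\alpha^{\,1-(1-\epsilon)[(1+\epsilon')q_m+\beta]/(q_m+\beta)}$, which vanishes once $\epsilon'$ is chosen small enough to make the exponent positive.

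The delicate point — and the reason the denominator is $q_m+|\log(1-\rho)|$ and not merely $q_m$ — is precisely this final bookkeeping: the prior weight $(1-\rho)^k=e^{-\beta k}$, the change of measure (supplying $e^{Z}$ of order $e^{q_m N_\alpha}$), and the false-alarm budget (supplying the saving $\alpha\approx e^{-|\log\alpha|}$) must balance so that the rates $q_m$, $\beta$ and $|\log\alpha|/N_\alpha$ cancel exactly at $N_\alpha=|\log\alpha|/(q_m+\beta)$. I expect the main obstacle to be making the tail estimate from \eqref{Z_convergence} survive summation over the unbounded range of change times $k$; this is resolved by observing that those tail probabilities do not involve $T$ and applying dominated convergence with dominating sequence $\rho(1-\rho)^k$. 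Once each conditional bound holds with an $o(1)$ that is uniform over $\Delta_\alpha$ (the only $T$-dependence sits inside $\text{PFA}(T)\le\alpha$), averaging over $m$ and taking the infimum over $T\in\Delta_\alpha$ yields \eqref{OPT_ADD_LOWER_BOUND}.
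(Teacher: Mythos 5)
Your proposal is correct, and it shares the paper's top-level structure: decompose $\text{ADD}(T)=\frac{1}{M}\sum_{m}\Ex[(T-t)^+\mid O=o_m]$ and prove a per-origin converse bound. The difference is what happens after the decomposition. The paper's proof is essentially two lines: it observes that conditioning on $O=o_m$ reduces the problem to standard Bayesian quickest detection with the non-i.i.d.\ post-change model \eqref{joint_likelihood}, and then invokes \cite[Thm.~1]{TARTAKOVSKY_GENERAL} as a black box to obtain \eqref{single_LB}. You instead re-derive that cited result from first principles via the classical change-of-measure argument: truncation at $N_\alpha=(1-\epsilon)|\log\alpha|/(q_m+|\log(1-\rho)|)$, Wald's likelihood-ratio identity on $\{Z_T^{k,m}<C\}$, the running-maximum upgrade of the a.s.\ convergence \eqref{Z_convergence} on the complement, dominated convergence against the geometric prior weights for the $T$-independent tails, and the identity $\Ex_\infty[(1-\rho)^T]=(1-\rho)^{-1}\text{PFA}(T)$ to convert the false-alarm budget into the $e^{\beta N_\alpha}$ factor. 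What your version buys is self-containment, an explicit uniformity-over-$\Delta_\alpha$ bookkeeping, and—importantly—a justification the paper leaves implicit: that the unconditional constraint $\text{PFA}(T)\le\alpha$ passes to each conditional problem because $\Prob(T<t\mid O=o_m)=\text{PFA}(T)$ (false alarms are determined by pre-change, $f_0$-distributed data regardless of $m$); without this observation, applying the single-origin converse to every $m$ is not literally licensed. What the paper's route buys is brevity and reliance on a theorem whose technical conditions are verified once in the literature. One cosmetic imprecision in your write-up: the main contribution is bounded by $\rho N_\alpha e^{C}e^{\beta N_\alpha}\alpha$ rather than a constant times $e^{C}e^{\beta N_\alpha}\alpha$, but since $N_\alpha=O(|\log\alpha|)$ is sub-polynomial in $1/\alpha$, the conclusion that this vanishes for small enough $\epsilon'$ is unaffected.
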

	\begin{proof}
From the definition of ADD we have that \begin{equation}\label{first_decomp}
\text{ADD}(T) = \frac{1}{M}\sum_{m=0}^{M-1} \text{ADD}_m(T),
\end{equation} where $\text{ADD}_m(T) \define \Ex \left[(T-t)^+ | O = o_m \right]$ is the detection delay when the true source location is $o_m$. Conditional on the source location $o_m$ being known, the problem reduces to a standard Bayesian quickest detection formulation with a non-i.i.d. post-change distribution given by \eqref{joint_likelihood}. A lower bound for the asymptotic detection delay of any procedure in the class $\Delta_\alpha$ in this setting was derived in \cite{TARTAKOVSKY_GENERAL}. Specifically, by \cite[Thm. 1]{TARTAKOVSKY_GENERAL}
\begin{equation}\label{single_LB}
\underset{T\in\Delta_\alpha}{\inf}{\text{ADD}_m}(T) \geq \frac{|\log \alpha|}{q_m + |\log(1-\rho)|}(1 + o(1)).
\end{equation}
The Lemma follows from combining \eqref{first_decomp} and \eqref{single_LB}.
	\end{proof}

	It should be noted that the asymptotic lower bound from \eqref{OPT_ADD_LOWER_BOUND} is identical to the lower bound for the case of instantaneous change, where all the sensors are affected at the same time \cite{VEERAVALLI_DECENTRALIZED}.\\
	\indent
	The almost sure converge of $Z_n^{k,m}$, as required in \eqref{Z_convergence}, is not sufficient for proving the asymptotic optimality of the threshold rule $T_{\text{RP}}$. Therefore, in the following theorem we impose some mild additional assumptions on the rate of convergence of ${Z}_n^{k,m}$ to $q_m$ and show that $T_{\text{RP}}$ is asymptotically optimal and attains the lower bound from \eqref{OPT_ADD_LOWER_BOUND}. To this end, we define for $\epsilon>0$ the random variable,
	\begin{equation*}
	Q_\epsilon^{(k,m)}=\sup\left\{n\in\mathbb{N} : \left|\frac{1}{n}Z_{k+n-1}^{k,m}-q_m\right|>\epsilon \right\},
	\end{equation*}
	which is the largest value of $n$ for which the absolute difference between $\frac{1}{n}Z_{k+n-1}^{k,m}$ and $q_m$ is larger than $\epsilon$. It is required that
	\be\label{cond_converge_Z}
	\sum_{k=1}^{\infty}\Prob(t=k){\Ex}[Q_\epsilon^{(k,m)}|t=k]<\infty,~\forall\epsilon>0, m \in [M].
	\ee
	Similarly to \cite[Eq. (3.22)]{TARTAKOVSKY_GENERAL}, the condition in \eqref{cond_converge_Z} is a joint condition on the convergence rates of $\frac{1}{n}Z_{k+n}^{k,m}$ for each $t=k$ and the prior distribution of the change point $t$. In particular, it is analogous to complete convergence \cite{HSU} of $\frac{1}{n}Z_{t+n}^{t,m}$ to $q_m$ under the distribution of $t$.
	\begin{Theorem}\label{T_RP_OPT}
		Suppose the conditions of Theorem \ref{THEOREM_ADD_LOWER} are satisfied and assume that \eqref{cond_converge_Z} is satisfied. Then $T_{\emph{RP}}$ is first-order asymptotically optimal in the limit $\alpha \to 0$, i.e.
		\be\label{asymptotic_ratio}
		\underset{\alpha\to 0}{\lim}\frac{\underset{T\in\Delta_\alpha}{\inf}{\emph{ADD}}(T)}{{\emph{ADD}}(T_{\emph{RP}})}=1,
		\ee
		where
		\be\label{asymptotic_ADD_equality}
		\underset{T\in\Delta_\alpha}{\inf}{\emph{ADD}}(T) = \frac{1}{M} \sum_{m=0}^{M-1} \frac{|\log \alpha|}{q_m + |\log(1-\rho)|}(1 + o(1)).
		\ee
	\end{Theorem}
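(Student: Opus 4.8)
The plan is to sandwich $\text{ADD}(T_{\text{RP}})$ between the lower bound of Lemma~\ref{THEOREM_ADD_LOWER} and a matching upper bound. By Proposition~\ref{PROP_PFA} we have $\text{PFA}(T_{\text{RP}})\le\alpha$, so $T_{\text{RP}}\in\Delta_\alpha$ and hence $\text{ADD}(T_{\text{RP}})\ge\inf_{T\in\Delta_\alpha}\text{ADD}(T)$; this already makes the ratio in \eqref{asymptotic_ratio} at most $1$, and together with \eqref{OPT_ADD_LOWER_BOUND} it reduces the entire theorem to proving the reverse asymptotic inequality $\text{ADD}(T_{\text{RP}})\le\frac{1}{M}\sum_{m=0}^{M-1}\frac{|\log\alpha|}{q_m+|\log(1-\rho)|}(1+o(1))$. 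Establishing this single inequality simultaneously forces the ratio to $1$ and pins down the infimum in \eqref{asymptotic_ADD_equality}.

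For the upper bound I would decompose $\text{ADD}(T_{\text{RP}})=\frac1M\sum_m\text{ADD}_m(T_{\text{RP}})$ as in \eqref{first_decomp} and work on the event $\{t=k,\,O=o_m\}$. Because $\rho_1=1$ the radius evolves deterministically after the change, so $\{R_n=0\}=\{t>n\}$, giving $\pi_{n,0}=\Prob(t>n\mid I_n)$; thus $T_{\text{RP}}$ is a Shiryaev-type test that stops when the posterior odds $\frac{1-\pi_{n,0}}{\pi_{n,0}}=\frac{\Prob(t\le n\mid I_n)}{\Prob(t>n\mid I_n)}$ first reach $\frac{1-\alpha}{\alpha}$. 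Retaining only the single mixture term corresponding to the true hypothesis, Bayes' rule together with the likelihood factorization \eqref{joint_likelihood} and the geometric prior \eqref{geom_prior} yields the pathwise lower bound
\[
\log\frac{1-\pi_{n,0}}{\pi_{n,0}}\ \ge\ Z_{n}^{k,m}+(n-k+1)\,|\log(1-\rho)|+\log\tfrac{\rho}{M},
\]
where $Z_n^{k,m}$ is the log-likelihood ratio from \eqref{Z_n_k}.

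By \eqref{Z_convergence}, $\frac{1}{n-k}Z_n^{k,m}\to q_m$ almost surely, so the right-hand side grows at rate $q_m+|\log(1-\rho)|$ per step — the $q_m$ contributed by the likelihood ratio and $|\log(1-\rho)|$ by the geometric prior — and the stopping threshold $\log\frac{1-\alpha}{\alpha}=|\log\alpha|(1+o(1))$ is crossed after at most $\frac{|\log\alpha|}{q_m+|\log(1-\rho)|}(1+o(1))$ steps, which is the desired pathwise delay bound. The delicate step is upgrading this almost-sure statement to a bound on the \emph{expected} delay $\text{ADD}_m(T_{\text{RP}})$: here I would invoke the complete-convergence condition \eqref{cond_converge_Z}, which bounds $\Ex[Q_\epsilon^{(k,m)}\mid t=k]$ summably against the prior and hence controls the tail of $(T_{\text{RP}}-t)^+$, supplying the uniform integrability needed to absorb the overshoot past the threshold and to pass from the pathwise bound to $\text{ADD}_m(T_{\text{RP}})\le\frac{|\log\alpha|}{q_m+|\log(1-\rho)|}(1+o(1))$. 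This mirrors, conditional on each source, the upper bound of the general Bayesian theory in \cite{TARTAKOVSKY_GENERAL}. Averaging over $m$ and combining with Lemma~\ref{THEOREM_ADD_LOWER} then gives \eqref{asymptotic_ratio} and \eqref{asymptotic_ADD_equality}.

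The main obstacle I anticipate is precisely this passage from the almost-sure convergence of $\frac1n Z_{k+n}^{k,m}$ to convergence of the normalized expected detection delay: one must bound the overshoot of the odds statistic past $\frac{1-\alpha}{\alpha}$ and simultaneously show that rare large delays are negligible after averaging over both the geometric prior on $t$ and the uniform prior on the source $O$, which is exactly the combined role played by the summability condition \eqref{cond_converge_Z}. A secondary technical point is verifying that discarding all mixture terms except the true one is asymptotically lossless, i.e. that the retained term alone drives the statistic across the threshold on the correct time scale.
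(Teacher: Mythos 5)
Your proposal is correct, but it takes a genuinely different route to the crucial upper bound than the paper does. The paper never analyzes the mixture statistic $\pi_{n,0}$ dynamically. Instead, it introduces $M$ auxiliary per-source Shiryaev rules $T^{(m)} = \inf\{n : \Prob(t\le n \mid I_n, O=o_m) \ge 1-\alpha/M\}$, sets $T^* = \min_m T^{(m)}$, and bounds $\mathrm{ADD}(T^*)$ by citing the single-source non-i.i.d.\ Bayesian theory of \cite{TARTAKOVSKY_GENERAL} as a black box for each conditional problem (the $\log M$ threshold inflation being absorbed into $1+o(1)$). The only new analytic content is a clean algebraic identity, $\pi_{n,0} = M\big/\big(M + \sum_{m} \tfrac{W_n^{(m)}}{1-W_n^{(m)}}\big)$ with $\Wnm = \Prob(t\le n \mid I_n, O=o_m)$, from which $\pi_{T^*,0}\le\alpha$ and hence $T_{\text{RP}} \le T^*$ pathwise, so $\mathrm{ADD}(T_{\text{RP}}) \le \mathrm{ADD}(T^*)$. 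You instead inline the Tartakovsky--Veeravalli delay analysis for the mixture statistic itself: your pathwise bound $\log\tfrac{1-\pi_{n,0}}{\pi_{n,0}} \ge Z_n^{k,m} + (n-k+1)|\log(1-\rho)| + \log\tfrac{\rho}{M}$ is correct (the retained-term computation checks out against the geometric prior, with $\Prob(t>n)=(1-\rho)^{n+1}$), and it makes transparent that the mixture over $M$ source locations costs only an $O(1)$ additive term, playing the same role as the paper's $\log M$ threshold inflation. Both proofs share the same reduction via Proposition \ref{PROP_PFA} and Lemma \ref{THEOREM_ADD_LOWER}, and both ultimately rest on \eqref{cond_converge_Z} --- you invoke it explicitly to pass from the almost-sure rate to a bound on the expected delay, whereas the paper uses it implicitly inside the citation of \cite{TARTAKOVSKY_GENERAL}. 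What each approach buys: the paper's comparison argument is modular and short, outsourcing all delay analysis; yours is self-contained and avoids auxiliary stopping rules, at the cost of redoing that analysis. One small clarification to your "delicate step": for a first-order result no overshoot or uniform-integrability control is actually needed --- on $\{t=k, O=o_m\}$ the delay is bounded by $N_{\alpha,\epsilon} + Q_\epsilon^{(k,m)} + 1$ with $N_{\alpha,\epsilon}$ deterministic, and averaging over the prior makes the $Q_\epsilon$ contribution an $\alpha$-independent constant by \eqref{cond_converge_Z}, which suffices.
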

	\begin{proof}
	The proof is given in Appendix \ref{proof:TRP_OPT}.
	
	\end{proof}
	\noindent
	
	\subsection{Detection of attenuating signals}\label{subsec:fading}
	In this subsection, we show how the obtained asymptotic results can be used to accurately approximate the expected detection delay in practically relevant settings. We consider the case of detecting an attenuating random Gaussian signal in additive noise. This model is highly relevant in a variety of practical applications in e.g. wireless communications and radar \cite{LAI_COGNITIVE}.  Prior to the change, only zero-mean i.i.d. Gaussian noise with variance $\sigma^2$ is observed. At an unknown time $t$, a signal source becomes active somewhere in the field. If the signal does not have any known structure, it can be modelled as zero-mean Gaussian with variance $\gamma^2$, where $\gamma^2$ is the signal transmit power. In free space, radio wave power decreases as the inverse square of distance $d$ between the source and the receiver \cite{RADIO_BOOK}. In most practical wireless settings, the path loss exponent, denoted here by $\theta$, is usually greater than 2 due to obstacles, reflectors and scatterers. Therefore, for a sensor at distance $d$ away from the source excluding antenna and frequency dependent factors, the observed signal is of the form $\mathcal{N}(0, \gamma^2/\tilde{d}^\theta)$, where $\tilde{d} = \max(d,1)$. Denoting $f_1^{(d)}$ as the post-change distribution at distance $d$ from the source, we have 
	$f_0 = \mathcal{N}(0, \sigma^2)$ and $f_1^{(d)} = \mathcal{N}(0, \sigma^2 + \gamma^2/\tilde{d}^\theta)$. Suppose for analysis purposes that the signal source location is known, that the domain of interest is a disk with large radius $R$ centered at the signal source, and that at each time step there are $L$ sensors located independently and uniformly at random within the disk. The following result establishes that $T_\text{RP}$ is asymptotically optimal in this setting, and provides a first order approximation of the asymptotic detection delay.
	
\begin{proposition}\label{prop:fading}
Under the conditions described in Section \ref{subsec:fading}, $T_\emph{RP}$ is first-order asymptotically optimal. Moreover, in the free-space conditions of path-loss exponent $\theta = 2$ 
\begin{equation}\label{add_gaussian}
\emph{ADD}(T_\emph{RP}) = \frac{|\log \alpha|}{Lq_\phi + |\log(1-\rho)|}(1 + o(1)),
\end{equation}
where $q_\phi = \frac{1}{2R^2}\left[\phi + \phi\log(\phi +1) - (\phi + R^2)\log\left(1+ \frac{\phi}{R^2}\right)\right]$ and $\phi = \gamma^2/\sigma^2$ is the SNR in linear scale.
\end{proposition}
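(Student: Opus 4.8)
The plan is to derive both assertions from Theorem~\ref{T_RP_OPT}. Because the source location is assumed known in this setting, the origin index is fixed (equivalently $M=1$), so it suffices to check the hypotheses of Theorem~\ref{T_RP_OPT} for this single origin and then identify the limiting log-likelihood rate $q$ appearing in \eqref{asymptotic_ADD_equality}. Granting this, first-order asymptotic optimality is immediate for any path-loss exponent $\theta$, and the explicit delay \eqref{add_gaussian} follows by inserting $q = L q_\phi$ into \eqref{asymptotic_ADD_equality} with $M=1$.

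First I would verify the convergence conditions \eqref{Z_convergence} and \eqref{cond_converge_Z}. Since $\rho_1=1$, on $\{t=k\}$ the radius grows deterministically as $R_i = i-k+1$, and because the source sits at the centre of the disk, $\mfR$ is finite (of order $R$); hence after at most $\mfR-1$ slots the radius saturates and every sensor lies inside the disruption. Writing $W_i \define \log\{f_{1,k}(\xvec_i)/f_0(\xvec_i)\}$ for the per-slot increment, in the saturated regime the $W_i$ are i.i.d.\ across time, as both the uniform sensor placements and the observations are redrawn each slot. A single-sensor increment equals $-\tfrac12\log(\sigma_1^2/\sigma^2)+\tfrac{x^2}{2}(\sigma^{-2}-\sigma_1^{-2})$ with $\sigma_1^2=\sigma^2+\gamma^2/\tilde d^{\,\theta}\in[\sigma^2,\sigma^2+\gamma^2]$, a quadratic form in a Gaussian with bounded coefficients, so $W_i$ has finite moments of every order. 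The transient block of at most $\mfR-1$ slots is bounded and so contributes a vanishing $\tfrac1n(\text{bounded})$ to the time-average; the strong law then gives $\tfrac1n Z_{k+n}^{k}\to q$ almost surely with $q=L\,\Ex_D[D(f_1^{(D)}\|f_0)]$, $D$ being the distance of a uniformly placed sensor from the centre. For \eqref{cond_converge_Z} I would use $\Ex[Q_\epsilon^{(k,m)}]\le\sum_n n\,\Prob(|\tfrac1n\sum_i (W_i-q)|>\epsilon)$ together with a Hsu--Robbins/Baum--Katz summability bound \cite{HSU}; finiteness of the moments of $W_i$ makes this sum finite, and since the law of $W_i$ does not depend on $k$ the bound $\Ex[Q_\epsilon^{(k,m)}]\le C_\epsilon$ is uniform in $k$, whence $\sum_k\Prob(t=k)\Ex[Q_\epsilon^{(k,m)}\mid t=k]\le C_\epsilon<\infty$.

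It remains to evaluate $q$ for $\theta=2$. The single-sensor divergence between the two zero-mean Gaussians is $D(f_1^{(d)}\|f_0)=\tfrac12(\phi/\tilde d^{\,2}-\log(1+\phi/\tilde d^{\,2}))$, and the distance of a uniform point in the disk has density $2d/R^2$ on $[0,R]$. I would then compute $\Ex_D[D(f_1^{(D)}\|f_0)]=R^{-2}\int_0^R d\,(\phi/\tilde d^{\,2}-\log(1+\phi/\tilde d^{\,2}))\,\mathrm{d}d$, splitting at $d=1$ to account for $\tilde d=\max(d,1)$ and substituting $v=d^2$. The antiderivative $-(v+\phi)\log(1+\phi/v)+\phi$ evaluated on $[1,R^2]$, together with the constant contribution on $[0,1]$, collapses exactly to $q_\phi$, giving $q=L q_\phi$ and hence \eqref{add_gaussian}.

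The main obstacle is the complete-convergence step \eqref{cond_converge_Z}: mere almost-sure convergence of $\tfrac1n Z_{k+n}^{k}$ does not suffice, and one needs a quantified convergence rate together with uniform-in-$k$ control of $\Ex[Q_\epsilon^{(k,m)}]$, which is precisely where the finite-moment (Baum--Katz) estimate does the work. Two secondary points require care: the random sensor placements must be folded into the i.i.d.-across-time structure underlying the strong law, and the integrand defining $q$ diverges as $d\to0$, so the $\tilde d=\max(d,1)$ cutoff is essential both for finiteness of the integral and for reproducing $q_\phi$.
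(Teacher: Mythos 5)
Your proposal is correct and follows essentially the same route as the paper's proof: reduce to Theorem \ref{T_RP_OPT} (with $M=1$ since the source location is known), obtain $q = L\,\Ex_{d}\bigl[D(f_1^{(d)}\,\|\,f_0)\bigr]$ from the strong law of large numbers after discarding the finite $\mfR$-slot transient, verify \eqref{cond_converge_Z} via complete convergence of the i.i.d.\ post-saturation increments, and evaluate the expected Gaussian KL divergence over the uniform-on-disk distance distribution. The only differences are cosmetic: you make the complete-convergence step explicit with a Hsu--Robbins/Baum--Katz moment bound uniform in $k$ where the paper simply appeals to \cite{TARTAKOVSKY_GENERAL}, and your integral computation (splitting at $\tilde d = 1$ and substituting $v = d^2$) arrives at exactly the stated $q_\phi$, matching the paper's final expression.
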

\begin{proof}
The proof is provided in Appendix \ref{proof:prop_fading}.
\end{proof}
	Observe, that the expression for $L q_\phi$ in \eqref{add_gaussian} further simplifies in the limit $R^2, L \to \infty$. When $R^2, L \to \infty$ such that $L/R^2 \to \lambda$, we have
	\begin{equation}\label{final_form}
	Lq_\phi \overset{R, L \to \infty}{\longrightarrow }\frac{\lambda}{2}\phi\log(\phi +1),
	\end{equation}
where $\lambda$ represents the average number of sensors per unit area.
	
	\section{Extension to multiple change-point detection}\label{sec:Extension to multiple change-point detection}
	In this subsection, we briefly describe how the simple structure of the RP stopping rule allows its use in settings when one is monitoring multiple separate fields and signal sources at once. This is a very relevant case in practice since in IoT, wireless networks or radar systems there may be multiple active signal sources, abrupt events or targets simultaneously and there is a need to strictly control the false positives in decision making while detecting changes rapidly. Suppose that there are $K\geq 2$ distinct clusters of sensors. For each cluster, $k\in[K]$, there may exist a random initial event (change point), at time $t^{(k)}$, that propagates and affects the sensors in the cluster according to the model in Section \ref{sec:Model and problem formulation}. We allow the probability of no event in a cluster to be non-zero, where no event implies an infinite change point. The spatial events and sensor observations of the different clusters are assumed to be independent. This assumption, while restrictive in general, is reasonable in cases where the sensor clusters exist in spatially dispersed locations, and the events are spatially localized. The assumed setup is illustrated in Fig. \ref{wavefront_cluster}. We would like to derive multiple stopping rules, $T^{(k)},~k\in[K]$, in order to discover all the change points, $t^{(k)},~k\in[K]$, respectively, while strictly controlling Type I errors. We employ a sequential multiple hypothesis testing framework for this purpose. \\
	\indent
	A practical assumption for any sequential detection procedure is that it must be stopped at some finite time instance. Thus, we allow the existence of a deadline $N_{\text{max}}$ for the multiple change-point detection. If a change point in the $k$th cluster has not been declared before time slot $N_{\text{max}}$, we declare that there is no spatial event in the $k$th cluster and set $T^{(k)}=\infty$. For detecting multiple change-points in parallel, the False Discovery Rate (FDR) is a relevant false alarm rate criterion \cite{CHEN_ZHANG_POOR_BAYESIAN_JOURNAL,NITZAN_TSP_BAYESIAN}. This criterion is defined as
	\be\label{FDR_define}
	{\text{FDR}}\define {\Ex}\bigg[\frac{ V}{\max (R, 1)}\bigg].
	\ee
	The term $V$ is the number of false discoveries (false alarms) under deadline, i.e. the size of the subset of $[K]$ s.t. $T^{(k)}<t^{(k)}$ and $T^{(k)}<N_{\text{max}}$. The term $R$ denotes the number of discoveries under deadline, i.e. the size of the subset of $[K]$ s.t. $T^{(k)}<N_{\text{max}}$. We would like to control the FDR s.t. it will be no higher than a predefined tolerated level $\alpha\in(0,1)$.\\
	\indent
	Taking into account the possibility of infinite change points, we denote by $K_f$ the random number of finite change points and define the overall ADD as
	\be\label{ADD}
	{\text{ADD}}\define{\Ex}\left[\frac{1}{K_f}\sum_{k,t^{(k)}<\infty}(T^{(k)}-t^{(k)})^+\right],
	\ee
	where for $K_f=0$ the argument of the expectation in \eqref{ADD} is zero. In case the {\em no change-point} probabilities are zero we can rewrite the ADD as
	\be\label{ADD_K}
	{\text{ADD}}\define \frac{1}{K}\sum_{k=1}^{K}{\Ex}[(T^{(k)}-t^{(k)})^+].
	\ee
	\indent
	For the considered multiple statistically independent clusters, we will implement the following $K$ parallel stopping rules:
	\be\label{CARP_K}
	T_{\text{RP}}^{(k)}\define{\inf}\left\{n\in\mathbb{N}_0 : \pi_{n,0}^{(k)} \leq \alpha\right\},~k\in[K],
	\ee
	where $\pi_{n,0}^{(k)}$ is the posterior probability of a cluster change point having occurred in cluster $k$. A cluster change point is the time of an initial event in the cluster. The threshold choice in \eqref{CARP_K} guarantees FDR control under upper bound $\alpha$, in accordance with the parallel version of the IS-MAP procedure in \cite{NITZAN_TSP_BAYESIAN}.\\
	\indent
	In each cluster it is assumed that there is no change point with probability $p_\infty$ and with probability $1-p_\infty$ the prior distribution of the initial change point, $t^{(k)}$, is geometrically distributed with parameter $\rho$. Under the above assumptions, the change-point posterior probability update is similar to the one described in Subsection \ref{subsec:posterior} and implemented for each cluster separately. However, some expressions for probabilities need to be rederived. For simplicity of presentation, we omit the cluster index, $k\in[K]$, in the following expressions. For a specific cluster, by using \eqref{geom_prior} and the no change-point probability, $p_\infty$, we obtain
	\be\label{R_0_1_to_R_0_multiple}
	\begin{split}
		\Prob(R_{n} = 1 | R_{n-1} = 0) &= P(t=n|t\geq n) \\
		&=\rho\frac{(1-p_\infty)(1-\rho)^{n-1}}{p_\infty + (1-p_\infty)(1-\rho)^{n-1}},
	\end{split}
	\ee
	$n\in\mathbb{N}$, where we recall that $P(R_{n} = 1 | R_{n-1} = 0) = 1 - P(R_{n} = 0 | R_{n-1} = 0)$. In addition to \eqref{R_0_1_to_R_0_multiple}, the following expressions are rewritten to take into account the no change-point probability:
	\begin{equation*}
	\Prob(R_{0} = 0)=p_\infty+(1-p_\infty)(1-\rho),  
	\end{equation*}
	\begin{equation*}
	\Prob(R_{0} = 1)=(1-p_\infty)\rho,
	\end{equation*}
	\begin{equation*}
	\Prob(U_{n,m,1}|U_{n-1,m,0})=\rho\frac{(1-p_\infty)(1-\rho)^{n-1}}{p_\infty + (1-p_\infty)(1-\rho)^{n-1}}, 
	\end{equation*}
	\begin{equation*}
	\Prob(U_{n,m,0}|U_{n-1,m,0})=\frac{p_\infty + (1-p_\infty)(1-\rho)^{n}}{p_\infty + (1-p_\infty)(1-\rho)^{n-1}},
	\end{equation*}
	\begin{equation*}
	\Prob(U_{0,m,1})=\frac{1}{M}(1-p_\infty)\rho, 
	\end{equation*}
	and 
	\begin{equation*}
	\Prob(U_{0,m,0})=\frac{1}{M}(p_\infty+(1-p_\infty)(1-\rho)),
	\end{equation*}
	for $m \in [M]$.
	
	\begin{figure}[ht!]
		\centering
		\includegraphics[scale = 1]{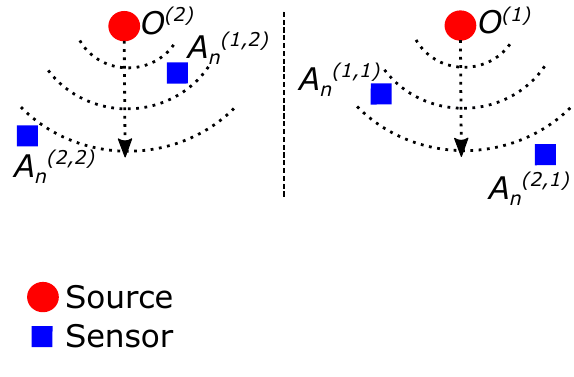}
		\caption{Sources wavefront propagation: Phenomena emanating in two distinct clusters of sensors from the sources $O^{(1)}$ and $O^{(2)}$ towards the sensors at locations $A_n^{(1,1)},A_n^{(2,1)}$ and $A_n^{(1,2)},A_n^{(2,2)}$, respectively.}
		\label{wavefront_cluster}
	\end{figure}

	\section{Numerical simulations}\label{sec:Numerical simulations}
	In this section, we evaluate the performance of the RP procedure in terms of PFA and FDR control and ADD performance under different radial propagation models and different multi-sensor configurations. 
In order to better understand the behavior of the RP procedure, we compare its performance to other procedures that either know the unobservable true source location, or deploy a more simplistic propagation model. Robustness to misspecification is tested by implementing a misspecified RP procedure that incorrectly assumes the event to propagate much faster than it does.
	
	
	\subsection{Simple Gaussian observation model}\label{subsec:Simple Gaussian observation model}
	We begin by considering a single cluster and a simple Gaussian observation model where $f_0 = \mathcal{N}(0,1)$ and $f_1 = \mathcal{N}(0,1+\gamma^2)$, no matter the sensor location and the event origin point. In all experiments $L = 100$ sensors are randomly placed on the field at each time instance. The true source location is selected randomly from a uniform distribution over the field. Observe, that this is in contrast to the design-stage assumption that the true source locations lies in the finite set $\mathcal{O}$. The RP procedure is compared against two other procedures. The first one is an Oracle version of the RP procedure that knows the exact source location. The Oracle procedure is a special case of the RP procedure with $|\mathcal{O}| = 1$. The other procedure implemented for comparison purposes assumes that the event, once it appears, affects all sensors instantly \cite{VEERAVALLI_DECENTRALIZED}.  We refer to this procedure as the Instant procedure. 
	The Instant procedure is also a particular special case of the RP procedure, where one assumes $R_n = \mathfrak{R}$ for $n\geq t$ and $R_n = 0$ for $n < t$. It is to be expected that this procedure will provide inferior performance to the RP procedure, as it does not take the dynamic nature of the propagation into account. However, the comparison will provide insight into to the behaviour of the RP procedure by highlighting the scenarios in which the performance gap between the properly specified and misspecified procedures is significant, and where the difference in performance is smaller.
	
	We start by setting $\rho = 0.02$, $\rho_1 = 0.25$, $\gamma^2 = 1$ and considering a square spatial field $\mathcal{S} = [0,10] \times [0,10]$ where the sensors and sources are located. The set $\mathcal{O}$ used by the RP procedure is taken to be an equally spaced grid of $M$ points which covers the field of interest. In addition to the properly specified RP procedure, we implement a mismatched RP stopping rule (with $M = 50$), which correctly assumes that radius increases with probability $\rho_1$ but with increments of 5 times the true radius increment (1 unit). It corresponds to a setting where the real event propagates slower than assumed by the RP procedure.
	
	\begin{table}
		\centering
		\caption{Observed false alarm probabilities for different threshold values $\alpha$. 
}
		\label{table1}
		\begin{tabular}{lrrrr}
			\toprule
			\multicolumn{1}{l}{}&\multicolumn{1}{c}{$\alpha = 0.1$}&\multicolumn{1}{c}{$\alpha = 0.05$}&\multicolumn{1}{c}{$\alpha = 0.01$}&\multicolumn{1}{c}{$\alpha = 0.005$}\tabularnewline
			\midrule
			RP, $M = 10$&$0.040$&$0.020$&$0.002$&$0.000$\tabularnewline
			RP, $M = 50$&$0.034$&$0.012$&$0.002$&$0.002$\tabularnewline
			RP, $M = 100$&$0.036$&$0.022$&$0.004$&$0.002$\tabularnewline
			RP, mismatched&$0.032$&$0.016$&$0.001$&$0.001$\tabularnewline
			Oracle&$0.024$&$0.016$&$0.004$&$0.004$\tabularnewline
			Instant&$0.008$&$0.002$&$0.001$&$0.001$\tabularnewline
			\bottomrule
		\end{tabular}
	\end{table}

	In Table \ref{table1}, observed false alarm probabilities of all procedures for different stopping thresholds $\alpha$ are displayed. It is confirmed that the theoretical PFA upper bound derived in Proposition \ref{PROP_PFA} holds in all cases. In the top plot of Figure \ref{fig:ex1}, the PFA-ADD trade-off curves are plotted for the procedures, with the RP procedure implemented using source location grids of density $M =$ 10, 50, and 100. For this small field, the performance of the RP procedure is comparable to the Oracle procedure. Furthermore, it is observed that increasing the density of the location grid in the RP procedure improves performance. However, under this configuration for $M = 50$ and $M = 100$ the gap in performance is already indistinguishable. All versions of RP procedure, including the misspecified one, outperform the Instant procedure.  In the middle plot of Figure \ref{fig:ex1}, the procedures are compared for varying values of the propagation parameter $\rho_1$, with $\alpha = 0.01$ fixed. For small values of $\rho_1$, the Instant procedure experiences performance loss in comparison to the others. This is because when $\rho_1$ is small the event will expand slowly with respect to the discrete-time sampling rate and thus remain spatially localized for a longer time, making it harder to detect for the Instant procedure. In general there is an inverse relationship between ADD and $\rho_1$ for all procedures, as a larger $\rho_1$ implies that the event will be visible to more sensors quicker. The RP and Oracle procedures provide near identical performance for all $\rho_1$. The mismatched RP procedure achieves lower ADD than the Instant procedure for all $\rho_1$ values.  In the bottom plot of Fig \ref{fig:ex1}, we fix $\rho_1 = 0.25$, $\alpha = 0.1$ and vary the signal power parameter $\gamma^2$. For unit noise variance, we have SNR (dB) = $10\log_{10}(\gamma^2)$. It is observed that at low SNR regime the difference between the RP and Instant procedures is smaller, but for moderate and and high SNRs a clear gap in performance in favor of the RP procedure again emerges. Moreover, the difference in performance between the RP and Oracle procedures is small, and the size of the perfomance gap is relatively independent of SNR.
	
		\begin{figure}[ht!]
		\centering
		\input{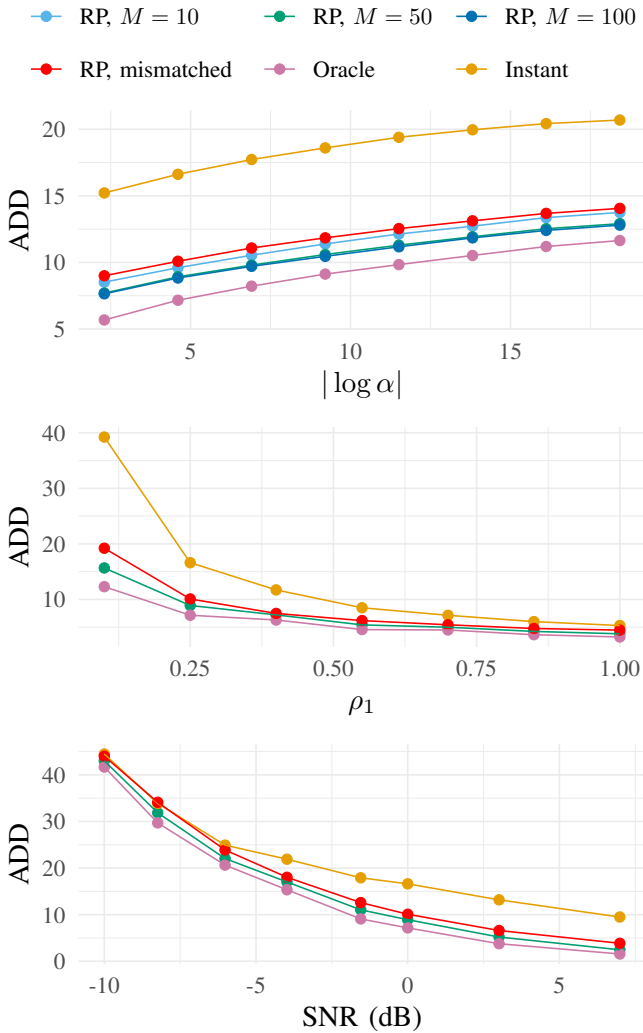}
		\caption{Top: Average detection delay as a function of the PFA bound $\alpha$ for all procedures. 
			Middle: ADD evaluated for different values of propagation parameter $\rho_1$, while keeping other parameters fixed. 
			Bottom: ADD as a function of the SNR.
		}
		\label{fig:ex1}
	\end{figure}

	\subsection{Detection of attenuating radio signals}\label{subsec:Fading signals}
	In this subsection, we implement the attenuating signal model introduced in Subsection \ref{subsec:fading}. To demonstrate the extension of the RP procedure to the detection of multiple events in parallel, we consider a setting with $K = 20$ distinct, independent sensor clusters each with 100 sensors as described in Sec. \ref{sec:Extension to multiple change-point detection}. Prior to the change in a given cluster, all sensors observe noise only, so that $f_0 = \mathcal{N}(0,1)$. When a signal source appears in the $k$th cluster at time $t^{(k)}$, it starts emitting an i.i.d. random signal modeled as $\mathcal{N}(0,\gamma^2)$. Due to path loss, the received signal strength attenuates according to a path-loss exponent $\theta$ of the distance $d$ from the source. The signal and noise are considered additive, hence for a sensor at distance $d$ from the signal source we have $f_1^{(d)} = \mathcal{N}(0, 1 + \gamma^2/\tilde{d}^\theta)$, where $\tilde{d} = \max(d_0,d/d_0)$ with $d_0$ being a reference distance where the received signal power equals $\gamma^2$. In the case of radio waves, the signal propagates at the speed of light $c$. The sensors take discrete time samples with some common sampling rate $f_s$. Therefore, the signal area radius expands in a deterministic manner (i.e. $\rho_1 = 1$) by $c/f_s$ meters in a single time step. We take each cluster area $\mathcal{S}_k, k \in [K]$ to be a square field with side length 5 km. The time at which the signal appears, $t^{(k)}$, is considered to have an exponential prior distribution with a mean (in seconds) of $\beta = 10$ in all sensor clusters. A routine computation utilizing the properties of the exponential and geometric distributions then shows that the sample index at which the emitted signal first appears obeys a geometric distribution with parameter $\rho = 1-\exp(-1/(\beta f_s))$. The RP procedure is again compared against an Oracle procedure that knows the true signal source location in each cluster, and the exact propagation dynamics. Additionally, two versions of the Instant procedure are implemented. The first one (called Instant-Oracle) knows the true and unobservable source location in each cluster, but assumes that the event reaches all sensors in the cluster immediately. The other one (Instant) assumes similarly to the RP procedure that the source location in each cluster belongs to a finite set $\mathcal{O}$, and that the change is immediate everywhere in the cluster. Note that in the setting of Subsection \ref{subsec:Simple Gaussian observation model} knowledge of the true source location is not utilized in the Instant procedure since the appearance of the event was assumed to immediately change all sampling distributions from $f_0$ to $f_1$ no matter the source location. However, in this setting, the post-change sampling distribution $f_1^{(d)}$ depends on the distance of the sensor from the source. Therefore, knowing the true source location has value even if the propagation is assumed immediate. Consequently, we obtain an interesting comparison between the RP and the Instant-Oracle procedures, as the RP procedure is aware of the propagation dynamics, but the Instant-Oracle has knowledge of the true source location.
	
	In all clusters, we set the signal power $\gamma^2 = 2$ at a reference distance of 500m from the source, $\alpha = 0.01$ and the path loss exponent $\theta = 2$. The true source location of each cluster is sampled uniformly at random from $\mathcal{S}_k$. The sensor locations are also random and uniform, and assumed to remain stationary during the monitoring process. In Figure \ref{fig:ex2}, the procedures are compared for different values of the sampling rate $f_s$.  The detection delay decreases for all procedures as the sampling rate increases, and the difference in ADD (in microseconds) between the RP and Oracle procedures shrinks as the sampling rate increases. It is observed, that for sufficiently high sampling rates the RP procedure achieves smaller detection delay than the Instant-Oracle procedure. When the sampling rate is high, accounting for the propagation dynamics is more valuable than theoretical knowledge of the source location, and vice versa when the propagation is rapid in comparison to the sampling rate. In Table \ref{table2}, the observed FDR values are displayed for different choices of the stopping threshold $\alpha$ when $f_s = 1$ MHz. It is demonstrated that the RP procedure controls the FDR below the prespecified level $\alpha$.
	
			\begin{table}
		\centering
		\caption{Observed false discovery rates for different threshold values $\alpha$. The RP procedure controls the FDR level below the specified threshold.}
		\label{table2}
		\begin{tabular}{lrrrr}
			\toprule
			\multicolumn{1}{l}{}&\multicolumn{1}{c}{$\alpha = 0.1$}&\multicolumn{1}{c}{$\alpha = 0.05$}&\multicolumn{1}{c}{$\alpha = 0.01$}&\multicolumn{1}{c}{$\alpha = 0.005$}\tabularnewline
			\midrule
			RP, $M = 10$&$0.062$&$0.036$&$0.002$&$0.001$\tabularnewline
			Oracle&$0.048$&$0.020$&$0.006$&$0.005$\tabularnewline
			Instant-Oracle&$0.042$&$0.024$&$0.004$&$0.002$\tabularnewline
			Instant&$0.062$&$0.026$&$0.001$&$0.001$
\tabularnewline
			\bottomrule
		\end{tabular}
	\end{table}

	
	\begin{figure}[ht!]
		\centering
		\input{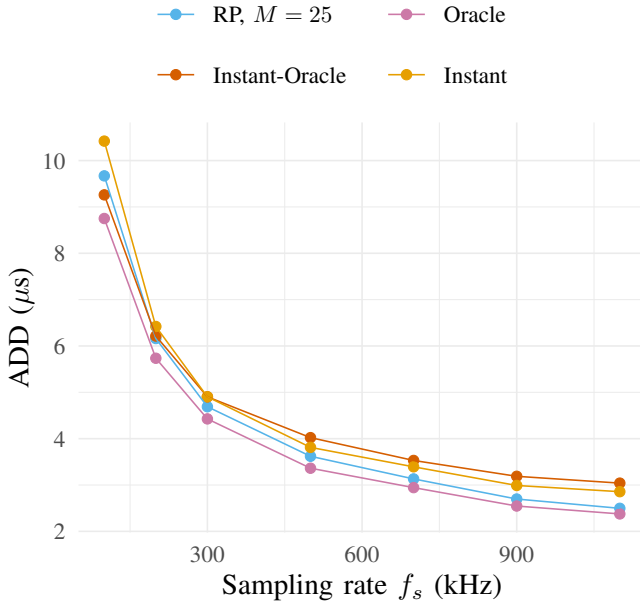}
		\caption{
			Top: ADD evaluated for different values of propagation parameter $f_s$ in the attenuating signal and multiple cluster setting.
		}\label{fig:ex2}
	\end{figure}

	\section{Conclusion}\label{sec:Conclusion}
	In this paper, we proposed a method for Bayesian quickest detection of spatial events with radial propagation patterns using a mobile sensor network. First, we considered a single spatial event. A dynamic programming framework was used to derive the structure of the optimal stopping time in terms of ADD under upper bound constraint on the PFA. The optimal procedure has a complicated structure and implementing an approximation is computationally challenging and infeasible to analyze. Therefore, utilizing a limiting form of the optimal procedure we proposed the simpler RP procedure that employs a stopping threshold on the posterior probability of the change point of interest. It was shown both analytically and experimentally that the RP procedure controls the PFA under a prespecified upper bound, even if the post-change probability models are misspecified. In addition, we showed that under some conditions the proposed RP procedure coincides with an asymptotically optimal procedure in terms of ADD as the PFA upper bound $\alpha\to 0$. Then, we proposed an extension to parallel detection of multiple spatial events occurring in distinct clusters. The proposed method stems from a multiple hypothesis testing problem formulation and strictly controls FDR criterion while taking into account the spatial nature of the observed phenomena or fields. A posterior probability update expression for multiple change-point detection which takes into account a probability that no event appears was derived.\\
	\indent
	In the simulations it was observed that for phenomena that propagate slowly with regard to the sampling rate, the RP procedure vastly outperforms a procedure that assumes that the effect takes place instantly everywhere in the field. Similarly, in the high SNR regime the RP procedure provided significantly better performance than the Instant procedure. When the event propagates very quickly in relative to the sampling rate, or alternatively the SNR is very low, the performance gap was smaller, although still in favor of the RP procedure.
	
	Topics for future research include the derivation of spatial procedures for multiple change-point detection and localization, where the locations of the signal sources are estimated using the observations. Additionally, extending the RP procedure to a non-Bayesian framework and studying its possible optimality properties is an interesting direction of future work.
	
\appendices

\section{Proof of Theorem \ref{TRP_CONV}}\label{proof:TRP_CONV}

Stemming from \cite[Th. 2]{RAGHAVAN}, our proof proceeds by showing that the optimal stopping time $T_\text{opt}$ can be written as 
		\begin{equation}\label{threshold_structure}
		T_\text{opt} = \inf \left\{n\in\mathbb{N}_0 : \pi_{n,0} < \frac{c + \Psi_n}{c+\rho}\right\},
		\end{equation}
		where $\Psi_n$ is a function such that $\frac{\Psi_n}{\rho} \to 0$  as $\rho \to 0$. The desired threshold test structure from \eqref{tau_Q} is then obtained in the limit $\rho \to 0$. Let us define 
		\begin{equation}\label{psieq}
		\Psi_n \define \mathcal{D}_n(\bm{p}_n) - (1-\rho)\pi_{n,0}.
		\end{equation}
		Substituting this definition into \eqref{optimal_stop} and rearranging gives \eqref{threshold_structure}. Then, the convergence of $\frac{\Psi_n}{\rho}\to 0$, as $\rho \to 0$, can be shown by introducing the transformation
		\begin{equation*}
		q_{n,r} = \frac{\pi_{n,r}}{\rho\pi_{n,0}} \iff \pi_{n,r} = \frac{q_{n,r}}{\sum_{\tilde{r}=0}^{\mfR} q_{n,\tilde{r}}}.
		\end{equation*}
		This expression allows for  using the steps in \cite[Th. 2]{RAGHAVAN} to complete the proof.

\section{Proof of Theorem \ref{T_RP_OPT}}\label{proof:TRP_OPT}
	
The proof is in two parts. First we define a set of $M$ stopping times $T^{(0)},...,T^{(M-1)}$, such that 
\begin{equation}
T^{(m)} \define \inf\{n : \Wnm \geq \nu  \},
\end{equation}
for some threshold $\nu$, where $\Wnm \define   \Prob (t \leq n | I_n, O = o_m)$. In Lemma \ref{lemma:multiple_UB} below, we show that a stopping time defined as the minimum of these $M$ stopping times with thresholds $\nu = 1-\alpha/M$ achieves the asymptotic ADD lower bound. Then, it is shown that $\text{ADD}(T_\text{RP}) \leq \text{ADD}(T^*)$, and the Theorem follows. 

\begin{lemma}\label{lemma:multiple_UB}
Let $T^* = \inf\{T^{(0)},...,T^{(M-1)} \}$ and $\nu = 1 - \alpha /M$. Then \begin{equation}\label{LEMMA1}
\emph{ADD}(T^*) \leq \frac{1}{M}\sum_{m = 0}^{M-1}\frac{|\log \alpha|}{q_m + |\log(1-\rho)|}(1+ o(1)),
\end{equation}
i.e. $T^{*}$ achieves the asymptotic ADD lower bound in \eqref{OPT_ADD_LOWER_BOUND}.
\end{lemma}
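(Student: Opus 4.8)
The plan is to reduce the analysis of the composite rule $T^*$ to $M$ separate conditional single-stream problems. First I would condition on the source location and use the uniform prior on $O$ to write, as in \eqref{first_decomp},
\begin{equation*}
\text{ADD}(T^*) = \frac{1}{M}\sum_{m=0}^{M-1}\text{ADD}_m(T^*), \qquad \text{ADD}_m(T^*)\define\Ex\!\left[(T^*-t)^+ \mid O = o_m\right].
\end{equation*}
Since $T^* = \inf\{T^{(0)},\dots,T^{(M-1)}\}$ satisfies $T^* \le T^{(m)}$ for every $m$ and $x\mapsto (x-t)^+$ is nondecreasing, we have $(T^*-t)^+ \le (T^{(m)}-t)^+$ pointwise, whence $\text{ADD}_m(T^*)\le \Ex[(T^{(m)}-t)^+\mid O=o_m]$. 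It therefore suffices to bound the conditional delay of each individual rule $T^{(m)}$ under its matching hypothesis $\{O = o_m\}$.

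Next I would identify each conditional problem with a classical non-i.i.d.\ Bayesian quickest detection problem. Given $\{O = o_m\}$, the change point $t$ has the geometric prior \eqref{geom_prior}, and on $\{t = k\}$ the observations obey the post-change model \eqref{joint_likelihood} whose log-likelihood ratio process is exactly $Z_n^{k,m}$ from \eqref{Z_n_k}. Because $\rho_1 = 1$ forces $\{R_n = 0\} = \{t > n\}$, the conditional posterior satisfies $W_n^{(m)} = 1 - \Prob(R_n = 0 \mid I_n, O = o_m)$, so $T^{(m)}$ is precisely the Shiryaev-type posterior-probability threshold rule that stops as soon as $\Prob(R_n = 0 \mid I_n, O=o_m) \le \alpha/M$. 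By the iterated-expectation argument of Proposition \ref{PROP_PFA} applied conditionally on $\{O=o_m\}$, this rule has conditional false-alarm probability at most $\alpha/M$.

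I would then invoke the asymptotic-optimality half of \cite[Thm.~1]{TARTAKOVSKY_GENERAL}, whose hypotheses are the almost-sure convergence \eqref{Z_convergence} together with the complete-convergence rate condition \eqref{cond_converge_Z}. It gives, for the conditional problem at false-alarm level $\alpha/M$,
\begin{equation*}
\Ex\!\left[(T^{(m)}-t)^+\mid O=o_m\right] \le \frac{|\log(\alpha/M)|}{q_m + |\log(1-\rho)|}\,(1+o(1)).
\end{equation*}
Since $M$ is a fixed integer, $|\log(\alpha/M)| = |\log\alpha| + \log M = |\log\alpha|(1+o(1))$ as $\alpha\to 0$, so the factor of $M$ is absorbed into the $(1+o(1))$ term. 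Combining this with the two reductions above and summing over $m$ yields \eqref{LEMMA1}.

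The routine parts are the averaging decomposition and the pointwise domination $T^*\le T^{(m)}$. The substantive step is the third one: I expect the main obstacle to be checking that the conditional problem genuinely fits the framework of \cite{TARTAKOVSKY_GENERAL} and, crucially, that \eqref{cond_converge_Z} is exactly the regularity needed for the \emph{upper} bound on the Shiryaev rule. Note that the lower bound used in Lemma \ref{THEOREM_ADD_LOWER} required only the almost-sure convergence \eqref{Z_convergence}, whereas attaining the bound from above requires the stronger complete-convergence condition \eqref{cond_converge_Z} to rule out the possibility that $\tfrac1n Z^{t,m}_{t+n}$ stays far from $q_m$ too often; verifying that \eqref{cond_converge_Z} supplies precisely this $r$-quick/complete convergence of the normalized log-likelihood ratio under the prior on $t$ is where the care is needed.
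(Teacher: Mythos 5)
Your proof is correct and takes essentially the same route as the paper's: the conditional decomposition of the ADD over source locations, the pointwise domination $T^* \le T^{(m)}$, the Tartakovsky--Veeravalli asymptotic ADD upper bound for the Shiryaev-type rule at threshold $\nu = 1-\alpha/M$, and absorption of $\log M$ into the $(1+o(1))$ factor. Your observation that the upper bound requires the complete-convergence condition \eqref{cond_converge_Z} rather than only \eqref{Z_convergence} is consistent with the paper, which states this lemma inside the proof of Theorem \ref{T_RP_OPT} where \eqref{cond_converge_Z} is assumed; the one minor slip is attributing the upper bound to \cite[Thm.~1]{TARTAKOVSKY_GENERAL}, which is the lower-bound result---the delay upper bound for the Shiryaev procedure comes from the later theorems of that reference.
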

\begin{proof}
Observe first from the definition of ADD that \begin{equation}\label{decomp}
\text{ADD}(T^*) = \frac{1}{M}\sum_{m=0}^{M-1} \text{ADD}_m(T^*),
\end{equation} where $\text{ADD}_m(T^*) \define \Ex \left[(T^*-t)^+ | O = o_m \right]$. When $o_m$ is the true source location, the problem reduces to a Bayesian quickest detection task with a non-i.i.d. post-change distribution. It is shown in \cite{TARTAKOVSKY_GENERAL}, that $T^{(m)}$ with a properly chosen stopping threshold is asymptotically optimal for minimizing ADD$_m$, so that
\begin{equation}\label{individual_bound}
\text{ADD}_m\left(T^{(m)}\right) \leq \frac{|\log (1-\nu)|}{q_m + |\log(1-\rho)|}(1+ o(1)).
\end{equation}
Since $|\log (1-\nu)| = |\log \alpha| + \log M = |\log \alpha|(1 + o(1))$, and by definition $T^* \leq T^{(m)}$ for all $m$, combining \eqref{decomp} and \eqref{individual_bound} yields \eqref{LEMMA1}.
\end{proof}
By Proposition \ref{PROP_PFA}, PFA$(T_\text{RP}) \leq \alpha$. Therefore, in order to prove Theorem \ref{T_RP_OPT}, it is sufficient to show that $\text{ADD}(T_\text{RP}) \leq \text{ADD}(T^*)$. We have
\begin{equation}
1 - \pi_{n,0} = \sum_{m=0}^{M-1} \Wnm \Prob(O = o_m | I_n),
\end{equation}
and 
\begin{equation}\label{PO_decomp}
\begin{split}
&\Prob(O = o_m | I_n) = \Prob(O = o_m, t \leq n |I_n) + \Prob(O = o_m, t > n |I_n) \\
&=\Wnm \Prob(O = o_m | I_n) + \frac{\pi_{n,0}}{M}.\\
\end{split}
\end{equation}
The second equality in \eqref{PO_decomp} follows from the fact that given $t > n$, the event $\{O = o_m\}$ is independent of $I_n$. Rearranging, one obtains $\Prob(O = o_m | I_n) = \pi_{n,0}/(M(1-\Wnm))$. Hence,
\begin{align}
1 - \pi_{n,0} &= \frac{\pi_{n,0}}{M}\sum_{m=0}^{M-1} \frac{\Wnm}{1-\Wnm}, \\
\pi_{n,0} &= \frac{M}{M + \sum_{m=0}^{M-1}\frac{\Wnm}{1-\Wnm}}.
\end{align}
Since the function $h(x) = M / (M+x)$ is decreasing in $x$ for $x > 0$, and $W_{T^*}^{(m)} \geq \nu = 1 - \alpha/M$ for some $m$ by definition of $T^*$, we obtain
\begin{equation}\label{pi_decomp}
\begin{split}
&\pi_{T^*,0} \leq \frac{M}{M + \frac{1-\alpha/M}{\alpha/M}} = \frac{M\alpha}{M\alpha + M - \alpha} \\
&= \alpha + \frac{\alpha^2 -  M\alpha^2  }{M\alpha + M - \alpha},
\end{split} 
\end{equation}
where the first equality is obtained by rearrangement, and the second equality by adding and subtracting $\alpha$ and rearranging. Since $M \geq 1$ and $\alpha \in [0,1]$, the second term on the last line of \eqref{pi_decomp} is non-positive, and hence $\pi_{T^*, 0} \leq \alpha$. As $T_\text{RP} = \inf\{n : \pi_{n,0} \leq \alpha\}$ and $\pi_{T^*, 0} \leq \alpha$, we obtain $T_\text{RP} \leq T^{*}$ and $\text{ADD}(T_\text{RP}) \leq \text{ADD}(T^*)$.

\section{Proof of Proposition \ref{prop:fading}}\label{proof:prop_fading}

We would like to use Theorem 4 to establish asymptotic optimality of $T_{\text{RP}}$ for quickest detection of the propagating signal, and approximate the detection delay in this setting. 
	
	To compute the constant $q$ defined in \eqref{Z_convergence} and appearing in \eqref{asymptotic_ADD_equality}, observe that we have the partitioning
	
	\be\label{Z_n_k_part}
	Z_{k+n}^k = \sum_{i=k}^{k+\mfR-1}\log\frac{f_{1,k}(\xvec_i)}{f_{0}(\xvec_i)}+\sum_{i=k+\mfR}^{k+n}\log\frac{f_{1,k}(\xvec_i)}{f_{0}(\xvec_i)},
	\ee
	where $R$ is the number of time steps needed for the event to cover the entire region. On $\{t = k\}$ when $i \geq k + R$, the signal reaches all sensors, no matter where they are located within the domain. Therefore, for $i \geq k + R$
	\begin{equation}\label{log_lik_form}
	\mathbb{E} \left[\log\frac{f_{1,k}(\xvec_i)}{f_{0}(\xvec_i)}\right] = L\mathbb{E}_{f_1^{(d)}}\left[\log \frac{f_1^{(d)}(x)}{f_0(x)}\right],
	\end{equation}
	where the latter expectation is over both the random location (in particular the random distance $d$ from the source) and the random observation generated from the post-change $f_1^{(d)}$ distribution.  Since $R$ is a finite constant, by the strong law of large numbers and \eqref{Z_n_k_part}-\eqref{log_lik_form}
	\begin{equation}\label{Z_ktoEf}
	\frac{1}{n} Z_{k+n}^{k} \underset{n \to \infty}{\longrightarrow} L\mathbb{E}_{f_1^{(d)}}\left[\log \frac{f_1^{(d)}(x)}{f_0(x)}\right] \quad {\text{a.s.}}
	\end{equation}
	By iterated expectation and a direct computation of the KL-divergence between two Gaussians we get
	\begin{equation}
	\mathbb{E}_{f_1^{(d)}}\left[\log \frac{f_1^{(d)}(x)}{f_0(x)}\right] = \frac{1}{2}\mathbb{E}_d\left[\frac{\phi}{\tilde{d}^\theta} - \log\left(1+ \frac{\phi}{\tilde{d}^\theta}\right)\right] \define q_\phi,
	\end{equation}
	where $\phi \define \gamma^2 / \sigma^2$ is the SNR in linear scale. As the sensor locations are uniform on the disk, $\Prob(d \leq s) =  (s/R)^2, \mbox{ for } 0 \leq s \leq R$.  Therefore,
	\begin{equation}\label{integral}
	q_\phi = \frac{1}{2R^2}\int_0^{R} \frac{\phi}{\tilde s^{\theta-1}} - s\log\left(1+ \frac{\phi}{\tilde s^\theta}\right) ds.
	\end{equation}
	Evaluating the integral in \eqref{integral} for a general path loss exponent $\theta$ is possible, but leads to convoluted results. In the commonly considered free-space conditions of $\theta = 2$, we have 
	\begin{equation}\label{integral_result}
	q_\phi = \frac{1}{2\mfR^2}\left[\phi + \phi\log(\phi +1) - (\phi + R^2)\log\left(1+ \frac{\phi}{R^2}\right)\right]. 
	\end{equation}
	
To apply Theorem 4, it remains to check that the joint convergence condition of both the observations and the prior distribution in \eqref{cond_converge_Z} is satisfied. This is straightforward, since on $\{t = k\}$, $Z_{k+n}^{k+\mfR}$ is a sum of i.i.d. random variables such that $n^{-1}Z_{k+n}^k$ converges almost surely to \eqref{final_form}, and $\mfR$ is a finite constant. Therefore, following \cite[Sec. 4]{TARTAKOVSKY_GENERAL}, \eqref{cond_converge_Z} is established. The Proposition then follows from Theorem 4 in this Appendix.
	
	\bibliographystyle{IEEEtran}
	\bibliography{refs}
\end{document}